\newtheorem{lem}{Lemma}
\newtheorem{lemma}[lem]{Lemma}
\newtheorem{prop}{Proposition}
\newtheorem{proposition}[prop]{Proposition}
\newtheorem{thm}{Theorem}
\newtheorem{theorem}[thm]{Theorem}
\newtheorem{cor}{Corollary}
\newtheorem{corollary}[cor]{Corollary}
\theoremstyle{remark}
\newtheorem*{remark}{\bf{Remarks}}
\newtheorem*{example}{{\bf Example}}
\newtheorem*{Acknowledgments}{Acknowledgments}
\theoremstyle{remar}
\def\\{\cr}
\def\({\left(}
\def\){\right)}
\def\<{\langle}
\def\>{\rangle}
\def\cC{C}
\def\cD{D}
\def\cF{\mathbb{F}}
\def\cH{\mathbb{H}}
\def\cO{\mathbb{O}}
\def\EE{\mathrm{E}}
\def\qrho{{q_{\rho}}}
\def\func#1{\mathop{\rm #1}}%
\begin{document}
\title[Asymptotic expansion]{Asymptotic expansion of Fourier coefficients of reciprocals of Eisenstein series}
\author{Bernhard Heim }
\address{Faculty of Mathematics, Computer Science, and Natural Sciences,
RWTH Aachen University, 52056 Aachen, Germany}
\email{bernhard.heim@rwth-aachen.de}
\author{Markus Neuhauser}
\address{Kutaisi International University, 5/7 Youth Avenue, Kutaisi, 4600 Georgia}
\email{markus.neuhauser@kiu.edu.ge}

\subjclass[2010]{Primary 11F30, 11M36, 26C10; Secondary  05A16, 11B37}
\keywords{Eisenstein series, Fourier coefficients, meromorphic modular forms, polynomials, 
Ramanujan,
recurrence relations}
\pagenumbering{arabic}
\begin{abstract}
In this paper we give a classification of the asymptotic expansion 
of the $q$-expansion of reciprocals of Eisenstein series $E_k$ of weight $k$ for the modular group
$\func{SL}_2(\mathbb{Z})$. For $k \geq 12$ even, 
this extends results of Hardy and Ramanujan, and Berndt, Bialek and Yee,
utilizing the Circle Method on the one hand, and results of Petersson, and Bringmann and Kane, 
developing a theory of meromorphic Poincar{\'e} series on the other.
We follow a uniform approach, based on the 
zeros of the Eisenstein series with the largest imaginary part. These special zeros
provide information on the singularities of the Fourier expansion of 
$1/E_k(z)$ with respect to $q = e^{2 \pi i z}$.
\end{abstract}

\maketitle
\newpage
\section{Introduction}
In this paper we provide a new approach to determine the
main asymptotic growth terms in the Fourier expansion of the reciprocals $1/E_k$ of
Eisenstein series of weight $k$. We refer to \cite{BFOR17}, Chapter 15
for a very
good introduction into the topic.

Eisenstein series are defined by
\begin{equation}
E_k(z):= 1 - \frac{2k}{B_k} \sum_{n=1}^{\infty} \sigma_{k-1}\left( n\right) \, q^n \quad (q:= e^{2 \pi i z}).
\end{equation}
They are modular forms \cite{O03} on the upper half of the complex plane $\cH $.
The algebra of modular forms with respect to the modular group $\func{SL}_2(\mathbb{Z})$ is generated by
$E_4$ and $E_6$. As usual $B_k$ denotes the $k$-th Bernoulli number and
$\sigma _{\ell }\left( n\right) := \sum_{d \mid n} d^{\ell }$.

Hardy and Ramanujan \cite{HR18B} launched in their last joint paper, 
the study of coefficients of meromorphic modular forms with a simple pole 
in the standard fundamental domain $\cF $. They demonstrated that, similar to their
famous asymptotic formula for the partition numbers 
\begin{equation}
p(n) \sim \frac{1}{4n \sqrt{3}} \, e^{ \pi \sqrt{\frac{2}{3}n}}, \qquad
\sum_{n=0}^{\infty} p(n) \, q^n := \frac{q^{\frac{1}{24}}}{\eta(z)},
\end{equation}
which had been given birth to the Circle Method \cite{HR18A},
formulas for the coefficients of
reciprocals of modular forms can be obtained.
The reciprocal of the Dedekind $\eta$-function is a weakly modular form of weight $-1/2$ on 
$\cH $.


Hardy and Ramanujan focused on the reciprocal of the Eisenstein series 
$E_6$. They proved an explicit formula
for the coefficients. Shortly afterwards, in a letter to 
Hardy, Ramanujan stated several formulas of the same type, including the $q$-expansion of $1/E_4$. No proofs
were given.

Bialek in his Ph.D.\
thesis, written under the guidance of Berndt \cite{BB05}, and finally
Berndt, Bialek and Yee \cite{BBY02} have proven the claims in the letter of Ramanujan by
extending the methods applied in \cite{HR18B}. 

We illustrate the case $k=4$. 
Following Ramanujan, we frequently put $\EE _k(q_z) := E_k(z)$ for $q=q_{z} := e^{2 \pi i z}$.
Let $\rho$
be the unique zero of $E_4$ in 
$\cF $.
Let $\lambda$ run over the integers of the form
$3^a \prod_{\ell=1}^{r} p_{\ell }^{a_{\ell }}$, where $a=0$ or $1$. Here, $p_{\ell }$ is a prime of the form $6m+1$, and $a_j \in \mathbb{N}_0$. Then \cite{BB05}:
\begin{equation}\label{E4}
\beta_4(n)= (-1)^n \frac{3}{\EE _6(q_{\rho})}   
\sum_{( \lambda )} \sum_{ (c,d)}   \frac{h_{(c,d)}(n)}{\lambda^3} \,\,
e^{\frac{\pi n \sqrt{3}}{\lambda}}.
\end{equation}
Here, $(c,d)$ runs over \emph{distinct\/}
solutions to $\lambda = c^2 - cd + d^2$, such that 
integers $a,b$ exist solving $ad-bc=1$. Let $h_{(1,0)}(n):=1$, $h_{(2,1)}(n):=(-1)^n$, and
for $\lambda \geq 7$:
\begin{equation}
h_{(c,d)}(n):=  2 \func{cos} \left( (ad+bc - 2 ac - 2bd + \lambda) \frac{ \pi \, n}{\lambda} - 6 \func{arctan}
\left( \frac{c \sqrt{3}}{2d - c}\right) \right).
\end{equation}
For the definition of distinct we refer to \cite{BB05}, Section 3.
From the explicit formula (\ref{E4}) one observes that the
main asymptotic growth comes from $(c,d)=(1,0)$. This yields 
(\cite{BK17}, Introduction):
\begin{eqnarray}
\beta_{4}(n)  & \sim & (-1)^n \frac{3}{E_6(\rho)} \, e^{\pi n \sqrt{3}}, \\
\beta_{6}(n)  & \sim & \frac{2}{E_8(i)} \, e^{2 \pi n } \label{E6},
\end{eqnarray}
where $\sum_{n=0}^{\infty} \beta_k(n) \, q^n :=
\frac{1}{E_{k}\left( z\right) }$. We added the asymptotic (\ref{E6}), which can be obtained in
a similar way.

Petersson \cite{P50} offered an alternative approach to study the $q$-expansion of
meromorphic modular forms. He defined Poincar{\'e} series with poles at arbitrary points in $\cH $ and of
arbitrary order, to provide a basis for the underlying vector spaces.
Recently, Bringmann and Kane \cite{BK17} have generalized Petersson's method. They have
also recorded several important examples.
\newline
\newline
\
In this paper we study the asymptotic expansions for all reciprocals of Eisenstein series.
Instead of proving first an explicit formula and then detecting the main growth terms, we
provide a direct approach. This is based on the distribution of the zeros
in the standard fundamental domain with the largest imaginary part.

Before we state our results, we want to point out as a warning that
the limits as $ n \rightarrow \infty$ for
$\beta_{4}\left( n\right) / \beta_{4}\left( n+1\right) $ and $\beta_{6}\left( n\right) / \beta_{6}\left( n+1\right) $
exist, but that this is not true for all $k$ as indicated in Table \ref{Table 1}.
\vspace{0.5cm}
\begin{center}
\begin{minipage}[t]{0.8\textwidth}
\begin{tabular}{|r||r|r|r|r|}
\hline
$n$ & $\frac{\beta _{4}\left( n\right) }{\beta _{4}\left( n+1\right) }\approx $ & $\frac{\beta _{6}\left( n\right) }{\beta _{6}\left( n+1\right) }\approx $ & $\frac{\beta _{12}\left( n\right) }{\beta _{12}\left( n+1\right) }\approx $ & $\frac{\beta _{14}\left( n\right) }{\beta _{14}\left( n+1\right) }\approx $ \\ \hline \hline
$1 $&$-4.3290 \cdot 10^{-3} $&$ 1.8622 \cdot 10^{-3} $&$ 5.1172 \cdot 10^{-4} $&$1.2170 \cdot 10^{-4}  $\\ \hline
$2$&$-4.3333 \cdot 10^{-3}$&$1.8677 \cdot 10^{-3}$& $-9.6536 \cdot 10^{-3}$&$4.1330 \cdot 10^{-3}   $\\ \hline
$3$&$-4.3334 \cdot 10^{-3}$&$1.8674 \cdot 10^{-3}$&$5.4260 \cdot 10^{-4}$&$1.1240 \cdot 10^{-3}$\\ \hline
$4 $&$-4.3334 \cdot 10^{-3}$&$1.8674 \cdot 10^{-3}$& $-8.9832 \cdot 10^{-3}$&$2.3564 \cdot 10^{-3}$\\ \hline
$5$&$-4.3334 \cdot 10^{-3}$&$1.8674 \cdot 10^{-3}$&$5.8359 \cdot 10^{-4}$&$1.6491 \cdot 10^{-3}$\\ \hline
$6$&$-4.3334 \cdot 10^{-3}$&$1.8674 \cdot 10^{-3}$& $-8.3936 \cdot 10^{-3}$&$1.9821 \cdot 10^{-3}$\\ \hline
$7$&$-4.3334 \cdot 10^{-3}$&$1.8674 \cdot 10^{-3}$&$6.2477 \cdot 10^{-4}$&$1.8133 \cdot 10^{-3}$\\ \hline
$\vdots $ & $\vdots $& $\vdots $& $\vdots $& $\vdots $ \\ \hline
$19$&$-4.3334 \cdot 10^{-3}$&$1.8674 \cdot 10^{-3}$&$8.8114 \cdot 10^{-4}$&$1.8674 \cdot 10^{-3}$\\ \hline
$20$&$-4.3334 \cdot 10^{-3}$&$1.8674 \cdot 10^{-3}$& $-5.6773 \cdot 10^{-3}$&$1.8674 \cdot 10^{-3}$\\ \hline
\end{tabular}
\captionsetup{margin={0cm,0cm,0cm,0cm}}
\captionof{table}{\label{Table 1}Quotients of successive coefficients of 
$1/\EE _{k}$ for $k\in \left\{ 4,6,12,14\right\} $}
\end{minipage}
\end{center}


\section{Results} \label{sect2}
The constants in the asymptotic expansion of $\beta_{k}(n)$, the coefficients of the $q$-expansion of the reciprocal of $\EE _k$, involve the Ramanujan 
$\Theta$-operator \cite{R16, BKO04} induced by residue calculation.
The differential operator 
$\Theta := q \frac{\mathrm{d}}{\mathrm{d}q}$ acts on 
formal power series by:
\begin{equation}
\Theta \left( \sum_{n=h}^{\infty} a(n) \, q^n \right) := \sum_{n=h}^{\infty} n \, a(n) \, q^n.
\end{equation}
Let $\EE _2(q):= 1 -24 \sum_{n=1}^{\infty} \sigma_1(n) \, q^n $. Ramanujan observed that
\begin{equation}
\Theta (\EE _4) 
= \left( \EE _4 \EE _2 - \EE _6   \right)/3 \text{ and } 
\Theta (\EE _6) 
= \left( \EE _6 \EE _2 - \EE _8   \right)/2.
\end{equation}
Our first results give an explicit interpretation of the
data presented in Table \ref{Table 1} for $k=6$ and $k=14$.

\begin{theorem}\label{th1}
Let $k \geq 4$ and $k \equiv 2 \pmod{4}$ be an integer. Then
$1/\EE _k$ has a $q$-expansion with radius $q_{i}=e^{-2 \pi}$: 
\begin{equation}
\frac{1}{ \EE _k(q)} = \sum_{n=0}^{\infty} \beta_k(n) \, q^n.
\end{equation}
The coefficients $\beta_k(n)$ are non-zero and have the asymptotic expansion
\begin{equation}
\beta_k(n) \sim - \frac{1}{ \Theta(\EE _k)(q_{i})}\,\,q_i^{-n}.
\end{equation}
\end{theorem}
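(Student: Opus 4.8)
The plan is to locate the singularity of $1/\EE_k$ on the circle $|q|=e^{-2\pi}$ and extract the residue. First I would use the theory of zeros of Eisenstein series: for $k\equiv 2\pmod 4$ with $k\geq 4$, one knows $E_k$ vanishes at $z=i$ (this follows from the fact that $E_k$ is fixed up to sign by $z\mapsto -1/z$, which forces $E_k(i)=0$ whenever the relevant sign is $-1$, i.e. when $k\equiv 2\pmod 4$), and by the valence formula the only zeros of $E_k$ in the standard fundamental domain $\cF$ are at $i$ and possibly at $\rho$ or at interior points on the arc $|z|=1$. The key geometric input is that among all zeros of $E_k$ in $\cH$, the ones of largest imaginary part are precisely the $\func{SL}_2(\mathbb{Z})$-translates of $i$ lying on the lines $\operatorname{Re} z\in\mathbb{Z}$, all of which have imaginary part exactly $1$; any zero in $\cF$ other than $i$ has imaginary part $<1$, and its translates landing in the strip $0\le \operatorname{Re} z<1$ have strictly smaller imaginary part. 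Consequently, in the $q$-disc, the singularity of $1/\EE_k$ of smallest modulus sits at $q=q_i=e^{-2\pi}$ and is the unique one on that circle.

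Next I would verify that this singularity is a simple pole. Since $E_k$ has a simple zero at $z=i$ (again by the valence formula, the orders at $i$, $\rho$, and a generic point must sum correctly; for $k\equiv 2\pmod 4$ the total zero count forces the order at $i$ to be $1$ once $k$ is small, and for larger $k$ one checks $E_k'(i)\neq 0$ — equivalently $\Theta(\EE_k)(q_i)\neq 0$, which is exactly the quantity appearing in the statement and must be shown nonzero), the function $1/E_k(z)$ has a simple pole at $z=i$ with residue $1/E_k'(i)$. Translating to the variable $q$: near $q=q_i$, write $q = q_i e^{2\pi i(z-i)}$, so $\frac{dq}{q}=2\pi i\,dz$ and $E_k(z)=\EE_k(q)$ has $\frac{d}{dq}\EE_k(q)\big|_{q_i} = \frac{1}{2\pi i\,q_i}E_k'(i) = \frac{1}{q_i}\Theta(\EE_k)(q_i)$. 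Hence $1/\EE_k(q)$ has a simple pole at $q=q_i$ with residue $q_i/\Theta(\EE_k)(q_i)$.

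Then I would apply the standard transfer theorem (Darboux's method / singularity analysis, cf. the Circle Method philosophy of \cite{HR18B}): if $f(q)=\sum\beta_k(n)q^n$ is meromorphic on $|q|\le e^{-2\pi}+\delta$ with a single simple pole on $|q|=e^{-2\pi}$ at $q_i$ with residue $R$, then $\beta_k(n) = -R\,q_i^{-n-1} + O\big((e^{-2\pi}+\delta)^{-n}\big)$, and since the error is exponentially smaller, $\beta_k(n)\sim -R\,q_i^{-n-1} = -\frac{1}{\Theta(\EE_k)(q_i)}q_i^{-n}$. This also gives that $\beta_k(n)$ is eventually nonzero; to get nonvanishing for all $n\geq 0$ one would additionally invoke that $\Theta(\EE_k)(q_i)$ is real (since the $q$-coefficients of $\EE_k$ are real and $q_i$ is real) so the main term has constant sign, combined with a direct check that the error never overtakes it, or a separate argument that $1/\EE_k$ has no zeros.

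The main obstacle I anticipate is the input from the distribution of zeros: one must prove rigorously that $z=i$ is the zero of $E_k$ of strictly largest imaginary part in $\cF$ (equivalently that no other zero of $E_k$ in $\cH$ has imaginary part $\ge 1$), and that it is simple with $\Theta(\EE_k)(q_i)\neq 0$, uniformly for all $k\equiv 2\pmod 4$. For $k\equiv 2\pmod 4$ the vanishing at $i$ is automatic from the transformation law, but controlling the other zeros — ruling out additional zeros with large imaginary part and handling the simplicity for all such $k$ at once — is the delicate point; this is presumably where the detailed analysis of zeros of $E_k$ (in the spirit of Rankin–Swinnerton-Dyer) enters.
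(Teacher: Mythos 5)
Your asymptotic argument is essentially the paper's own proof: locate the unique singularity of $1/\EE_k$ on the circle $\vert q\vert =e^{-2\pi}$ at $q_i$, note it is a simple pole, subtract the principal part so that the remaining Taylor coefficients are negligible against $q_i^{-n}$, and convert the residue $1/\frac{\mathrm{d}}{\mathrm{d}q}\EE_k(q_i)=q_i/\Theta(\EE_k)(q_i)$ into the stated constant; your chain-rule bookkeeping and the final constant agree with the paper. Two remarks on the inputs you flag as obstacles: the fact that all zeros of $E_k$ in $\cF$ lie on the arc $\vert z\vert =1$ is \emph{not} a consequence of the valence formula --- it is precisely the Rankin--Swinnerton-Dyer theorem, which the paper simply cites (Proposition \ref{prop3}); that citation also gives simplicity of the zero at $i$ for $k\equiv 2\pmod 4$, so $\Theta(\EE_k)(q_i)\neq 0$ requires no separate verification. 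Your observation that all $\Gamma$-translates of $i$ of imaginary part $1$ give the same point $q=e^{-2\pi}$, while every other zero leads to $\vert q\vert >e^{-2\pi}$, is the same uniqueness argument the paper makes via $J(i)=i$ and the non-increase of the imaginary part.

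The one genuine gap is the claim that $\beta_k(n)\neq 0$ for \emph{all} $n$. The asymptotic only yields eventual non-vanishing, and neither of your proposed repairs works: that $1/\EE_k$ is zero-free as a function says nothing about its Taylor coefficients, and ``checking that the error never overtakes the main term'' is not available for small $n$ from the asymptotic alone (nor uniformly in $k$). The correct and elementary argument is positivity: for $k\equiv 2\pmod 4$ one has $B_k>0$, hence
\begin{equation*}
\EE_k(q)=1-\sum_{n\geq 1}\varepsilon_k(n)\,q^n,\qquad \varepsilon_k(n)=\frac{2k}{B_k}\,\sigma_{k-1}(n)>0,
\end{equation*}
and the recursion $\beta_k(n)=\sum_{j=1}^{n}\varepsilon_k(j)\,\beta_k(n-j)$ with $\beta_k(0)=1$ gives $\beta_k(n)>0$ for all $n$ by induction. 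This is exactly the mechanism behind the lower bound in Theorem \ref{th2}; to be fair, the paper's written proof of Theorem \ref{th1} also leaves this point implicit, but your proposal should include it (or cite the positivity) rather than the two suggested fixes.
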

The number $q_i= e^{-2 \pi} \approx 1.867442 \cdot 10^{-3}$ is transcendental. 
It is well-known that the so-called Gel$^{\prime }$fond constant $e^{\pi}$ is transcendental.
This was first proven by
Gel$^{\prime }$fond in 1929. It can also be deduced from the Gel$^{\prime }$fond--Schneider Theorem,
which solved Hilbert's seventh problem \cite{W08}. We refer to a result by
Nesterenko (also \cite{W08}, Section 5.6). Let $z \in \cH $. Then
at least already three of the four numbers
\begin{equation}
q_z, \EE_2(q_z), \EE_4(q_z), \text{ and } \EE_6(q_z)
\end{equation}
are algebraically independent. Since $\EE_4(q_{\rho}) = \EE_6(q_i)=0$, we obtain
that $q_i, \EE_4(q_i)$ and $q_{\rho}, \EE_6(q_{\rho})$ are transcendental.

Moreover, $\Theta(\EE _k)(q_{i})$ for $k=6,10,14$ can be explicitly expressed by
$\Gamma(\frac{1}{4})$ and $\pi$. For example,
\begin{equation}
\Theta(\EE _6)(q_{i}) = - \frac{1}{2} \EE _4(q_i)^2, \text{ where } 
\EE _4(q_i)=
\frac{3 \, \Gamma(\frac{1}{4})^8}{(2 \pi)^6}.
\end{equation}
We can also extract the numbers $q_i$ and $\EE _4(q_i)$ from the coefficients.
\begin{corollary} \label{cor1}
Let $k \geq 4$ and $k \equiv 2 \pmod{4}$. Then
\begin{eqnarray}
\lim_{n \to \infty} 
\frac{\beta_{k}\left( n\right) }{\beta_{k}\left( n+1\right) } & = & q_{i},   \label{cor1:prop1}           \\
\lim_{n \to \infty} 
\frac{\beta_{6}\left( n\right) }{\beta_{10}\left( n\right) } 
& = & \lim_{n \to \infty} 
\frac{\beta_{10}\left( n\right) }{\beta_{14}\left( n\right) } = \EE _4(q_i).  \label{extract}
\end{eqnarray}
\end{corollary}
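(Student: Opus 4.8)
\textbf{Proof plan for Corollary \ref{cor1}.}
The plan is to derive both limits directly from the asymptotic expansion furnished by Theorem \ref{th1}, which I am free to assume. For the first limit, Theorem \ref{th1} gives, for every $k\equiv 2\pmod 4$ with $k\geq 4$, the asymptotic $\beta_k(n)\sim c_k\,q_i^{-n}$ with the nonzero constant $c_k := -1/\Theta(\EE_k)(q_i)$, and it also asserts that the $\beta_k(n)$ are eventually (indeed always) nonzero. Hence the quotient $\beta_k(n)/\beta_k(n+1)$ equals $\bigl(c_k q_i^{-n}(1+o(1))\bigr)/\bigl(c_k q_i^{-(n+1)}(1+o(1))\bigr)$, and since $c_k\neq 0$ the constants cancel, leaving $q_i\cdot(1+o(1))\to q_i$. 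This establishes \eqref{cor1:prop1}.

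For the second chain of limits I would again use $\beta_k(n)\sim c_k q_i^{-n}$, now comparing two \emph{different} weights that share the same exponential scale $q_i^{-n}$. For $k,k'\in\{6,10,14\}$ one gets $\beta_k(n)/\beta_{k'}(n)\to c_k/c_{k'}=\Theta(\EE_{k'})(q_i)/\Theta(\EE_k)(q_i)$. So it remains to identify the ratios $\Theta(\EE_6)(q_i)/\Theta(\EE_{10})(q_i)$ and $\Theta(\EE_{10})(q_i)/\Theta(\EE_{14})(q_i)$ with $\EE_4(q_i)$. The key structural input is the Ramanujan--Serre-type identity together with the fact that $\EE_6(q_i)=0$: writing $\Theta(\EE_k)=\bigl(k\,\EE_k\EE_2-k\,\EE_{k+2}\bigr)/12$ (the normalization consistent with the two cases displayed in the excerpt, $\Theta(\EE_4)=(\EE_4\EE_2-\EE_6)/3$ and $\Theta(\EE_6)=(\EE_6\EE_2-\EE_8)/2$), and evaluating at $q_i$ where $\EE_6(q_i)=0$, every summand containing a factor $\EE_6$ or a factor of the cusp form $\Delta$ drops out. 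Since in weight $k$ the space of modular forms is spanned by $\EE_4^{a}\EE_6^{b}$, the value $\EE_{k+2}(q_i)$ is a polynomial in $\EE_4(q_i)$ and $\EE_6(q_i)$; at $q_i$ only the pure power of $\EE_4$ survives. Concretely $\EE_8=\EE_4^2$, $\EE_{12}=c\,\EE_4^3+c'\Delta$, $\EE_{16}=\EE_4^4+c''\Delta\EE_4$, so $\EE_8(q_i)=\EE_4(q_i)^2$, $\EE_{12}(q_i)=c\,\EE_4(q_i)^3$, $\EE_{16}(q_i)=\EE_4(q_i)^4$, and hence $\Theta(\EE_6)(q_i)=-\tfrac12\EE_4(q_i)^2$, $\Theta(\EE_{10})(q_i)=-\tfrac{5}{12}\cdot 2\cdot\,?$ — more precisely each $\Theta(\EE_k)(q_i)$ becomes a constant times a pure power $\EE_4(q_i)^{m}$, with $m$ increasing by one as $k$ increases by four, so the successive ratios are constants times $\EE_4(q_i)$. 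The remaining routine check is that the accumulated rational constants are exactly $1$; this is a finite computation using the known Eisenstein identities $\EE_8=\EE_4^2$, $7\EE_{10}=5\EE_4\EE_6+2\,(\text{something})$ etc., and I would carry it out case by case for $k=6,10$.

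The main obstacle is precisely this bookkeeping of the rational coefficients: one must make sure that, after substituting $\EE_6(q_i)=0$ into the Ramanujan differentiation formulas and into the decompositions of $\EE_{8},\EE_{12},\EE_{16}$ in terms of $\EE_4^{a}\EE_6^{b}$ and $\Delta$, the constants telescope so that $c_6/c_{10}$ and $c_{10}/c_{14}$ both equal $\EE_4(q_i)$ on the nose rather than a rational multiple of it. Everything else — the cancellation of the asymptotic constants in \eqref{cor1:prop1}, the nonvanishing of $\beta_k(n)$, and the passage from $\beta_k(n)\sim c_kq_i^{-n}$ to the stated limits — is immediate from Theorem \ref{th1}. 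One should also remark that the equality of the two limits in \eqref{extract} is then automatic once each is shown to equal $\EE_4(q_i)$, so no independent argument is needed for the middle term of the chain.
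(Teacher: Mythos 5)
Your handling of \eqref{cor1:prop1} is correct and is essentially the paper's own route (the paper deduces it either from Theorem \ref{th1} or from the single-pole quotient theorem of P\'olya--Szeg\H{o}). The genuine gap is in your derivation of \eqref{extract}. The formula you extrapolate, $\Theta(\EE_k)=\tfrac{k}{12}\left(\EE_k\EE_2-\EE_{k+2}\right)$, holds for $k=4,6,8$ but fails for $k=10$ and $k=14$: the Serre-type derivative $\Theta(\EE_k)-\tfrac{k}{12}\EE_2\EE_k$ is a modular form of weight $k+2$, and once that weight admits cusp forms it is no longer a multiple of $\EE_{k+2}$. Compounding this, your claim that summands containing $\Delta$ ``drop out'' at $q_i$ is false: $\Delta$ has no zeros on $\cH$, and in fact $\Delta(q_i)=\EE_4(q_i)^3/1728$ precisely because $\EE_6(q_i)=0$; likewise $\EE_{16}(q_i)\neq\EE_4(q_i)^4$. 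If you run the computation with your stated ingredients the constants do not telescope to $1$: using $\EE_{12}=\tfrac{1}{691}\left(441\,\EE_4^3+250\,\EE_6^2\right)$, your formula would give $\Theta(\EE_{10})(q_i)=-\tfrac{5}{6}\EE_{12}(q_i)=-\tfrac{735}{1382}\EE_4(q_i)^3$ rather than the correct $-\tfrac12\EE_4(q_i)^3$, so the first limit in \eqref{extract} would come out as $\tfrac{735}{691}\EE_4(q_i)$. In other words, the ``routine check'' you defer is exactly the place where the argument, as set up, breaks.

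The repair is the paper's route, which avoids all rational bookkeeping: use the factorizations (\ref{identities}), $\EE_8=\EE_4^2$, $\EE_{10}=\EE_4\EE_6$, $\EE_{14}=\EE_4^2\EE_6$, together with the Leibniz rule for $\Theta$. Since $\EE_6(q_i)=0$, every term carrying a factor $\EE_6$ (not $\Delta$) vanishes at $q_i$, and one gets exactly $\Theta(\EE_6)(q_i)=-\tfrac12\EE_8(q_i)=-\tfrac12\EE_4(q_i)^2$, $\Theta(\EE_{10})(q_i)=\EE_4(q_i)\,\Theta(\EE_6)(q_i)=-\tfrac12\EE_4(q_i)^3$, and $\Theta(\EE_{14})(q_i)=\EE_4(q_i)^2\,\Theta(\EE_6)(q_i)=-\tfrac12\EE_4(q_i)^4$. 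Combined with the asymptotic $\beta_k(n)\sim-\,q_i^{-n}/\Theta(\EE_k)(q_i)$ from Theorem \ref{th1}, both ratios in \eqref{extract} are then exactly $\EE_4(q_i)$, which is the argument the paper gives.
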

Hardy and Ramanujan stated lower and upper bounds at the end of their initial work \cite{HR18B}
on the coefficients of the reciprocal of $1/E_6$.
We generalize their idea to all cases $k \equiv 2 \pmod{4}$ including $k=2$
and also improve their result in the original case $k=6$.
\begin{theorem}\label{th2}
Let $k \equiv 2 \pmod{4}$ and $k$ a positive integer.
Let $x_0:= \frac{2k}{B_k}$.
Then we have for all $n \in \mathbb{N}$
\begin{equation}
\frac{\left( \frac{x_{0}+\sqrt{\Delta _{k}}}{2} \right) ^{n+1}-\left( \frac{x_{0}-\sqrt{\Delta _{k}}}{2} \right) ^{n+1} }{\sqrt{\Delta _{k}}} \leq \beta_k(n)
\end{equation}
with
$\Delta _{k}=x_{0}^{2}+4\left( 2^{k-1}+1\right) x_{0}$
and
\begin{equation}
\beta_k(n) \leq \frac{\left( x_{0}-\frac{b_{k}-\sqrt{D_{k}}}{2}\right) \left( \frac{b_{k}+\sqrt{D_{k}}}{2}\right) ^{n}+\left( \frac{b_{k}+\sqrt{D_{k}}}{2}-x_{0}\right) \left( \frac{b_{k}-\sqrt{D_{k}}}{2}\right) ^{n}}{\sqrt{D_{k}}}
\label{eq:untereschranke}
\end{equation}
with
$b_{k}=x_{0}+a_{k}$,
$c_{k}=\left( 2^{k-1}+1-a_{k}\right) x_{0}$,
and $D_{k}=b_{k}^{2}+4c_{k}$ for all $k$ where $a_{2}=\sqrt{7/3}$ and
$a_{k}=\frac{3^{k-1}+1}{2^{k-1}+1}$ for $k\geq 6$.
\end{theorem}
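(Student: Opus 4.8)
\textbf{Setup.} The plan is to translate both inequalities into comparisons between the Cauchy–product recurrence for $\beta_k(n)$ and two explicitly solvable second–order linear recurrences. Since $k\equiv 2\pmod 4$ one has $B_k>0$, hence $x_0=2k/B_k>0$ and $\EE_k(q)=1-x_0\sum_{m\ge 1}\sigma_{k-1}(m)q^m$. Reading off coefficients in the Cauchy product identity $\EE_k(q)\cdot\sum_n\beta_k(n)q^n=1$ gives $\beta_k(0)=1$ and
\begin{equation}\label{planstar}
\beta_k(n)=x_0\sum_{m=1}^n\sigma_{k-1}(m)\,\beta_k(n-m)\qquad(n\ge 1).
\end{equation}
As $x_0>0$ and $\sigma_{k-1}(m)>0$, an induction on $n$ gives $\beta_k(n)>0$ for all $n$; in particular \eqref{planstar} yields $\beta_k(n)\ge x_0\beta_k(n-1)\ge x_0^{\,j}\beta_k(n-j)$ for $0\le j\le n$, a decay estimate I will use below.

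\textbf{Lower bound.} Keeping only the terms $m=1,2$ in \eqref{planstar} and discarding the non-negative remainder gives, for $n\ge 2$,
\begin{equation*}
\beta_k(n)\ \ge\ x_0\beta_k(n-1)+(2^{k-1}+1)x_0\beta_k(n-2),
\end{equation*}
since $\sigma_{k-1}(1)=1$ and $\sigma_{k-1}(2)=2^{k-1}+1$. Let $u(n)$ solve the corresponding equality recurrence with $u(0)=1$, $u(1)=x_0$; its characteristic polynomial $t^2-x_0t-(2^{k-1}+1)x_0$ has roots $\tfrac{x_0\pm\sqrt{\Delta_k}}{2}$ with $\Delta_k=x_0^2+4(2^{k-1}+1)x_0$, and a short check shows that $u(n)$ is exactly the asserted lower bound (it has the right values at $n=0,1$ and solves the recurrence). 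Since $\beta_k$ and $u$ agree at $n=0,1$ and the recurrence coefficients are positive, induction gives $\beta_k(n)\ge u(n)$ for all $n$.

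\textbf{Upper bound.} Here the crux is to bound the tail $T_n:=\sum_{m=3}^n\sigma_{k-1}(m)\beta_k(n-m)$ of \eqref{planstar} by $a_k$ times $S_n:=\sum_{m=3}^n\sigma_{k-1}(m-1)\beta_k(n-m)$. Applying \eqref{planstar} at $n-1$ gives $x_0S_n=\beta_k(n-1)-x_0\beta_k(n-2)$, so $T_n\le a_kS_n$ is precisely the inequality
\begin{equation*}
\beta_k(n)\ \le\ b_k\beta_k(n-1)+c_k\beta_k(n-2),\qquad b_k=x_0+a_k,\quad c_k=(2^{k-1}+1-a_k)x_0.
\end{equation*}
For $k\ge 6$ I would prove the stronger, term-by-term bound $\sigma_{k-1}(m)\le a_k\sigma_{k-1}(m-1)$ for every $m\ge 3$, i.e.\ $\sigma_{k-1}(m)(2^{k-1}+1)\le(3^{k-1}+1)\sigma_{k-1}(m-1)$, where $a_k=\tfrac{3^{k-1}+1}{2^{k-1}+1}=\sigma_{k-1}(3)/\sigma_{k-1}(2)$: it is an equality at $m=3$, and for $m\ge 4$ it follows from $\sigma_s(m)=m^s\sum_{e\mid m}e^{-s}\le\zeta(s)m^s$ together with $\sigma_s(m-1)\ge(m-1)^s$ (with $s=k-1\ge 5$), which reduces it to $\zeta(5)(1+2^{-s})\le(9/8)^s$ at $m=4$ and to $\zeta(5)(1+2^{-s})\le(6/5)^s$ for $m\ge 5$, both of which hold. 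For $k=2$ the term-by-term bound fails at $m=4$; instead I would use the decay estimate $\beta_2(n-m)\le x_0^{-(m-3)}\beta_2(n-3)$ together with $S_n\ge(2^{k-1}+1)\beta_2(n-3)=3\beta_2(n-3)$ to reduce $T_n\le a_2S_n$ to the single numerical inequality $\sum_{m\ge 3}\sigma_1(m)\,24^{-(m-3)}\le 3a_2=\sqrt{21}$, whose left-hand side is about $4.30$; this is what dictates the choice $a_2=\sqrt{7/3}$. In all cases $c_k\ge 0$ (for $k\ge 6$ since $3^{k-1}+1\le(2^{k-1}+1)^2$, for $k=2$ since $\sqrt{7/3}<3$), so starting from $\beta_k(0)=v(0)=1$ and $\beta_k(1)=v(1)=x_0$ the displayed inequality propagates by induction to $\beta_k(n)\le v(n)$, where $v$ solves $v(n)=b_kv(n-1)+c_kv(n-2)$. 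Solving this recurrence (characteristic roots $\tfrac{b_k\pm\sqrt{D_k}}{2}$ with $D_k=b_k^2+4c_k>0$) with these initial values gives exactly the right-hand side of \eqref{eq:untereschranke}.

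\textbf{Main obstacle.} The two inductions and the solution of the recurrences are routine; the content lies in the uniform upper estimate of the convolution tail. For $k\ge 6$ this is the elementary divisor inequality $\sigma_{k-1}(m)\le a_k\sigma_{k-1}(m-1)$, which must nonetheless be checked uniformly in both $m\ge 3$ and $k$, the case $m=4$, $k=6$ being the tightest. For $k=2$ the term-by-term comparison genuinely breaks down (at $m=4$), and one must instead exploit the rapid geometric decay of $\beta_2$ supplied by \eqref{planstar} — this is precisely the phenomenon that forces the anomalous value $a_2=\sqrt{7/3}$. A small bookkeeping point is the non-negativity of $c_k$, needed so that the induction step preserves the upper inequality.
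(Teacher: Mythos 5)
Your proposal is correct, and while it sandwiches $\beta_k(n)$ between exactly the same two second--order recurrences as the paper (same $a_k,b_k,c_k,\Delta_k,D_k$, same initial values $1,x_0$, identical lower-bound step by truncating the convolution at $m=1,2$), the justification of the upper recurrence is genuinely different. The paper never touches the convolution tail directly: it majorizes the coefficient sequence $\varepsilon_k(n)=x_0\sigma_{k-1}(n)$ by the sequence $\delta_k(n)=\varepsilon_k(2)a_k^{\,n-2}$ coming from the rational function $\frac{1-b_kq-c_kq^2}{1-a_kq}$, proves $\varepsilon_k(n)\le\delta_k(n)$ for all $n\ge 3$ (for $k\ge 6$ via $\sigma_{k-1}(n)<\frac{k-1}{k-2}n^{k-1}$ and a monotonicity lemma for $\bigl(\frac{1+3^{-\ell}}{1+2^{-\ell}}\bigr)^{1/\ell}$; for $k=2$ via a finite check plus a logarithmic-derivative estimate of $(1+\ln n)n$), and then compares reciprocal coefficients $\gamma_k(n)\ge\beta_k(n)$ by induction on the full convolution. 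You instead derive the two-term inequality $\beta_k(n)\le b_k\beta_k(n-1)+c_k\beta_k(n-2)$ directly, by splitting off the tail $T_n$ and re-substituting the recurrence at $n-1$ to express $S_n$; for $k\ge 6$ your pointwise ratio bound $\sigma_{k-1}(m)\le a_k\sigma_{k-1}(m-1)$ is strictly stronger than the paper's aggregate geometric bound (it implies it by telescoping) and your $\zeta(s)$-based verification is sound, with $m=4$ handled separately as needed; for $k=2$, where the ratio bound indeed fails at $m=4$, your tail absorption via $\beta_2(n)\ge 24\,\beta_2(n-1)$ and the numerical estimate $\sum_{m\ge3}\sigma_1(m)24^{-(m-3)}\approx 4.30<\sqrt{21}$ is a valid alternative to the paper's route (the paper's geometric bound $\sigma_1(n)\le 3(7/3)^{(n-2)/2}$ does survive at $k=2$, with equality at $n=4$, which is why the anomalous $a_2=\sqrt{7/3}$ appears there too). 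What your route buys is a more self-contained, convolution-level argument avoiding the auxiliary majorant series and the two lemmas; what the paper's route buys is a single uniform mechanism (coefficientwise domination of generating functions) that covers $k=2$ and $k\ge 6$ by the same comparison principle. Your bookkeeping points (positivity of $\beta_k$ from $B_k>0$ for $k\equiv 2\pmod 4$, non-negativity of $c_k$, and that the solved recurrences with initial values $1,x_0$ reproduce the stated closed forms) are all correct and match the paper's final computation of $\alpha_k(n)$ and $\gamma_k(n)$.
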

The case $k\equiv 0 \pmod{4}$ is more complicated. For large $k$, we cannot expect that the
limit as ${n \to \infty}$ of  $\beta_k(n)/\beta_{k} (n+1)$ exists, since we have two poles on the circle of
convergence. But for $k=4$ and $k=8$ there is still only one pole. 
\begin{proposition}\label{prop1}
Let $q_{\rho} = e^{2 \pi \rho} = - e^{ - \pi \sqrt{3}}$. Let $m \in \mathbb{N}$. Then
the coefficients $\beta_{4,m}(n)$ of the $m$th power of $\EE _4^{-1}$ i.~e.\
\begin{equation}
\sum_{n=0}^{\infty}  \beta_{4,m}(n) \, q^n := \left(\frac{1}{\EE _4(q)}\right)^m 
\end{equation}
satisfy for all $m$: 
\begin{equation}
\lim_{ n \to \infty} \frac{\beta_{4,m}(n)}{\beta_{4,m}(n+1)} = q_{\rho}.
\end{equation}
\end{proposition}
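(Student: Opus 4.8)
The plan is to exploit that $E_4$ has a single zero $\rho$ in the fundamental domain $\cF$, located on the arc $|z|=1$ at $z = \rho = e^{2\pi i/3}$, so that $q_\rho = e^{2\pi i \rho} = -e^{-\pi\sqrt 3}$, and that this zero is \emph{simple}. Consequently $(1/E_4(q))^m = \EE_4(q)^{-m}$ is a meromorphic function of $q$ on the disc $|q| < |q_\rho|$ which extends meromorphically past the boundary, with a single pole on the circle $|q| = |q_\rho| = e^{-\pi\sqrt 3}$, namely at $q = q_\rho$, and that pole has order exactly $m$. The nearest other zeros of $E_4$ in $\cH$ (equivalently, the images of $\rho$ under $\func{SL}_2(\Z)$ other than $\rho$ itself) have strictly larger imaginary part bounded away from $\func{Im}\rho = \sqrt 3/2$, so the next singularity of $\EE_4^{-m}$ in the $q$-plane lies on a strictly larger circle $|q| = e^{-2\pi y_1}$ with $y_1 > \sqrt 3/2$. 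This is exactly the gap that forces the ratio limit.

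First I would make the analytic continuation precise: write $E_4(z) = (z-\rho)\,g(z)$ locally with $g(\rho)\neq 0$ holomorphic near $\rho$, and pass to the variable $q$ via $z \mapsto q = e^{2\pi i z}$, which is a local biholomorphism near $z=\rho$ since $\rho \in \cH$; hence $\EE_4(q) = (q - q_\rho)^1 \cdot G(q)$ near $q_\rho$ with $G(q_\rho)\neq 0$ holomorphic. Therefore $\EE_4(q)^{-m} = (q-q_\rho)^{-m} H(q)$ with $H$ holomorphic and nonzero near $q_\rho$. Second, I would subtract the principal part: there is a rational function $R_m(q) = \sum_{j=1}^{m} \frac{c_j}{(q-q_\rho)^j}$ (with $c_m = 1/G(q_\rho)^m \neq 0$) such that $F(q) := \EE_4(q)^{-m} - R_m(q)$ is holomorphic on a disc $|q| < e^{-2\pi y_1}$ strictly larger than $|q_\rho|$, because all the other poles of $\EE_4^{-m}$ in that larger disc have been excluded — by the location of the $\func{SL}_2(\Z)$-orbit of $\rho$ — and the only one inside has been cancelled. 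Expanding $R_m$ in powers of $q$ gives, for the $n$th coefficient,
\begin{equation}
\beta_{4,m}(n) = \sum_{j=1}^{m} c_j \binom{n+j-1}{j-1} (-q_\rho)^{-n-j} q_\rho^{\,?} + O\big(r^{-n}\big),\qquad r = e^{-2\pi y_1} > |q_\rho|,
\end{equation}
so the dominant term is the $j=m$ contribution, which is $\sim \tilde c_m\, n^{m-1} q_\rho^{-n}$ for a nonzero constant $\tilde c_m$. Dividing $\beta_{4,m}(n)$ by $\beta_{4,m}(n+1)$, the $n^{m-1}$ factors and the nonzero constants cancel, $(n/(n+1))^{m-1}\to 1$, the error terms are negligible because $r > |q_\rho|$, and one is left with $q_\rho$ in the limit.

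The main obstacle is the \textbf{geometric input} that $\rho$ is the unique zero of $E_4$ of largest imaginary part and that the next zeros in $\cH$ occur strictly higher up, so that $\EE_4^{-m}$ genuinely continues to a strictly larger disc with exactly one pole removed. For $E_4$ this is classical (all zeros lie on the unit arc of $\cF$ and its translates, $\func{Im}\rho = \sqrt3/2$ being the maximum, with the next orbit representative having imaginary part $\geq \sqrt3/2 \cdot 1/|c z + d|^2$-type bound strictly exceeding $\sqrt3/2$ for the relevant $\gamma$); I would cite the valence formula together with the explicit description of the zero set of $E_4$ used elsewhere in the paper for $m=1$. A secondary technical point is checking that $c_m \neq 0$ and that no cancellation kills the leading $n^{m-1}$ term — but $c_m = G(q_\rho)^{-m}$ is manifestly nonzero, so this is immediate. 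Everything else — the partial-fraction subtraction, the binomial expansion, the bookkeeping of the error term — is routine once the continuation statement is in hand.
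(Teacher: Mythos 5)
Your route is essentially the paper's: the proof in the paper is a one-line application of the P\'olya--Szeg\H{o} criterion recalled at the start of Section 3 (a power series whose circle of convergence carries exactly one singularity, that singularity being a pole, satisfies $a(n)/a(n+1)\to q_0$), applied to $\EE_4^{-m}$, whose only singularity on the circle of convergence is the order-$m$ pole at $q_{\rho}$. Your principal-part subtraction and binomial expansion simply re-derive that criterion in the case at hand; the dominant term $\tilde c_m\, n^{m-1} q_{\rho}^{-n}$ with $\tilde c_m\neq 0$ does give the ratio limit, the lower-order principal-part terms and the coefficients of the holomorphic remainder being negligible, and the simplicity of the zero of $E_4$ at $\rho$ correctly gives a pole of order exactly $m$.

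However, your geometric input is stated backwards, and as written it is inconsistent with your own radius-of-convergence claim. You assert that the other zeros of $E_4$ (the $\Gamma$-translates of $\rho$) have \emph{strictly larger} imaginary part, and that the next singularity lies on a ``strictly larger circle'' $|q|=e^{-2\pi y_1}$ with $y_1>\sqrt{3}/2$. Since $|q_z|=e^{-2\pi \func{Im} z}$ is \emph{decreasing} in the imaginary part, $y_1>\sqrt{3}/2$ would put poles strictly \emph{inside} the disc $|q|<|q_{\rho}|$, contradicting the assertion that the radius of convergence is $|q_{\rho}|$. The correct fact (and the one the paper invokes) is the opposite: for $z\in\cF$ and $\gamma\in\Gamma$ one has $\func{Im}(\gamma z)\leq \func{Im}(z)$, so every zero of $E_4$ in $\cH$ outside $\rho+\Z$ has imaginary part $\frac{\sqrt{3}/2}{c^2+cd+d^2}\leq \frac{\sqrt{3}}{6}<\frac{\sqrt{3}}{2}$, hence corresponds to a singularity of modulus at least $e^{-\pi\sqrt{3}/3}$, strictly larger than $|q_{\rho}|=e^{-\pi\sqrt{3}}$; this is what provides the annulus into which $\EE_4^{-m}$ minus its principal part continues holomorphically. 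With the inequality reversed your argument goes through. A smaller point: the coefficient formula containing ``$q_{\rho}^{\,?}$'' was left unfinished; the $j$th principal-part term contributes $c_j(-1)^j\binom{n+j-1}{j-1}q_{\rho}^{-n-j}$ to $\beta_{4,m}(n)$, which does not change your conclusion.
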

\begin{remark}
\ \newline
a) For small weights the following identities exist: 
\begin{equation}\label{identities}
E_8 = E_4^2, \, E_{10}= E_4 \cdot E_6 \text{ and }E_{14} = E_4^2 \cdot E_6.
\end{equation}
b) Let the principal part of $\EE_4^{-m}$ at the pole $q_{\rho}$ be given by
\begin{equation}
\sum_{k=1}^m \frac{\lambda_{m,k}}{\left( q - q_{\rho}\right) ^{k}} \label{principal},
\end{equation}
then 
$\lambda_{m,m} = {\func{res}}_{q_{\rho}}\left( \EE_4^{-1}\right) ^m.$ 
It would be interesting to get explicit formulas for all $\lambda_{m,k}$, $1 \leq k \leq m$.
Especially for the case $m=2$.
\newline
c) We have $\func{res}_{q_{\rho}} (\EE_4^{-1}) = \frac{-3\, q_{\rho}}{\EE_6(q_{\rho})}$.
\end{remark}

We know that $\beta_4(n)$ and $\beta_8(n)$ are non-zero for all $n \in \mathbb{N}_0$
\cite{HN20B}. 
We provide new proof of the asymptotic expansion for $k=4$. This is the main term
of a formula first conjectured by Ramanujan and proven about 80 years later by
Bialek \cite{BB05}.
For the case $k=8$, we also refer to \cite{BK17}.

\begin{theorem}\label{th3}
We have $(-1)^n \beta_4(n) \in  240 \mathbb{N}$ for all $n \in \mathbb{N}$.
Further, we have the asymptotic expansion
\begin{equation}
\beta_4(n)  \sim - \frac{1}{\Theta (\EE_4) (\qrho)} \, q_{\rho}^{-n},
\end{equation}
where $\Theta (\EE_4) (\qrho) = -\EE_6(q_{\rho})/3$.
\end{theorem}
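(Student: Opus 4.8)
The plan is to exploit the factorization $\EE_8 = \EE_4^2$ from (\ref{identities}) together with the fact that $\rho$ (equivalently $q_\rho = -e^{-\pi\sqrt3}$) is a \emph{simple} zero of $\EE_4$ in $\cF$, and is the unique zero of $\EE_4$ of largest imaginary part. I would first establish the integrality claim $(-1)^n\beta_4(n)\in 240\,\N$. Writing $\EE_4(q) = 1 + 240\sum_{n\ge1}\sigma_3(n)q^n$, the reciprocal $1/\EE_4 = \sum_n \beta_4(n)q^n$ satisfies the recursion $\beta_4(0)=1$ and $\beta_4(n) = -240\sum_{j=1}^n \sigma_3(j)\,\beta_4(n-j)$ for $n\ge1$. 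An easy induction on $n$ shows $\beta_4(n) = (-1)^n 240\, c_n$ with $c_n$ a positive integer: the sign alternation is immediate from the recursion since $\sigma_3(j)>0$ and all lower terms alternate, and strict positivity of $c_n$ follows because the $j=n$ term already contributes $\sigma_3(n)\ge1$ while no cancellation can occur after multiplying through by $(-1)^n$ (all summands become nonnegative). This is exactly the statement of \cite{HN20B} that $\beta_4(n)\ne0$, upgraded to the $240\N$ refinement, and I would cite that paper for the non-vanishing and give the short induction for the congruence.

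For the asymptotic expansion, the strategy mirrors the proof of Theorem~\ref{th1} but now with a pole at $q_\rho$ on the real interval $(-1,0)$ rather than at $q_i$. The meromorphic function $1/\EE_4(q)$, viewed in the $q$-disc, is holomorphic at $q=0$ and its first singularity as $|q|$ grows is at $q = q_\rho$, since $\rho$ has the largest imaginary part among zeros of $\EE_4$ in the standard fundamental domain and the correspondence $z\mapsto q_z$ is injective and contracts imaginary part to modulus; any other zero $z'$ of $\EE_4$ in $\cH$ is $\func{SL}_2(\Z)$-equivalent to $\rho$ and has $\mathrm{Im}(z')\le \mathrm{Im}(\rho) = \sqrt3/2$, hence $|q_{z'}|\ge |q_\rho| = e^{-\pi\sqrt3}$, with equality only for the translates of $\rho$, which all give the same point $q_\rho$. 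So $q_\rho$ is the \emph{unique} singularity on the circle $|q| = e^{-\pi\sqrt3}$, and it is a simple pole because $\rho$ is a simple zero of $\EE_4$. Standard singularity analysis (subtract the principal part $\frac{r}{q-q_\rho}$ where $r = \func{res}_{q_\rho}(1/\EE_4)$; the remainder is holomorphic in a strictly larger disc, so its coefficients decay geometrically faster than $q_\rho^{-n}$) then gives
\begin{equation}
\beta_4(n) = -\frac{r}{q_\rho}\,q_\rho^{-n} + O\!\left((q_\rho(1+\delta))^{-n}\right)
\end{equation}
for some $\delta>0$, hence $\beta_4(n)\sim -\frac{r}{q_\rho}q_\rho^{-n}$.

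It remains to identify the constant. The residue of $1/\EE_4$ at the simple pole $q_\rho$ is $r = 1/\EE_4'(q_\rho)$ where $' = \frac{d}{dq}$; since $\Theta = q\frac{d}{dq}$ we have $\Theta(\EE_4)(q_\rho) = q_\rho\,\EE_4'(q_\rho)$, so $-\frac{r}{q_\rho} = -\frac{1}{q_\rho\,\EE_4'(q_\rho)} = -\frac{1}{\Theta(\EE_4)(q_\rho)}$, which is the claimed leading constant; this also matches part (c) of the Remark via $\func{res}_{q_\rho}(\EE_4^{-1}) = -3q_\rho/\EE_6(q_\rho)$. Finally, Ramanujan's identity $\Theta(\EE_4) = (\EE_4\EE_2 - \EE_6)/3$ evaluated at $q_\rho$, using $\EE_4(q_\rho)=0$, gives $\Theta(\EE_4)(q_\rho) = -\EE_6(q_\rho)/3$, establishing the last displayed formula. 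The main obstacle is the verification that $q_\rho$ is genuinely the closest singularity and is isolated on its circle of convergence — i.e. controlling \emph{all} zeros of $\EE_4$ in $\cH$, not just the one in $\cF$; this is handled by the valence formula (the only zero of $\EE_4$ in $\cF$ is the simple zero at $\rho$) combined with modular invariance and the observation that $\mathrm{Im}$ is maximized on the $\func{SL}_2(\Z)$-orbit of $\rho$ at the representative in $\cF$.
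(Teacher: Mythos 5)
Your treatment of the asymptotic expansion is essentially the paper's own argument: by Rankin--Swinnerton-Dyer the zero of $E_4$ of largest imaginary part is $\rho$, its only orbit points of the same height are the integer translates $\rho+n$ (in particular $J(\rho)=\rho-1$), all of which give the same point $q_\rho$ in the $q$-disc; hence $q_\rho$ is the unique singularity on the circle of convergence and is a simple pole, and subtracting the principal part plus the residue identity $\func{res}_{q_\rho}(\EE_4^{-1})=q_\rho/\Theta(\EE_4)(q_\rho)=-3q_\rho/\EE_6(q_\rho)$ gives the stated asymptotic. That part is correct and coincides with the paper's proof (which simply runs the proof of Theorem~\ref{th1} again at $q_\rho$).

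The gap is in the first claim. Your ``easy induction'' does prove divisibility by $240$, but not the sign alternation and positivity. From $\beta_4(n)=-240\sum_{j=1}^{n}\sigma_3(j)\beta_4(n-j)$ and the inductive hypothesis $\beta_4(m)=(-1)^m\vert\beta_4(m)\vert$ for $m<n$ one gets
\begin{equation*}
(-1)^n\beta_4(n)\;=\;240\sum_{j=1}^{n}(-1)^{j+1}\,\sigma_3(j)\,\bigl\vert\beta_4(n-j)\bigr\vert ,
\end{equation*}
an alternating sum in $j$: the summands do \emph{not} ``all become nonnegative'' after multiplying by $(-1)^n$, so positivity is not immediate and your parenthetical justification fails. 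To push an induction through one needs a quantitative companion estimate, e.g.\ a strengthened hypothesis of the form $\vert\beta_4(m)\vert\ge C\,\vert\beta_4(m-1)\vert$ with $C$ large enough (the true ratio is about $e^{\pi\sqrt3}$) to dominate the growth of $\sigma_3(j)$ and control the alternating tail. This is exactly why the paper does not argue formally but cites \cite{HN20B} (see also \cite{AKN97}) for the full statement $(-1)^n\beta_4(n)\in 240\,\N$; your fallback citation of \cite{HN20B} covers only the non-vanishing, while the sign pattern, which you declare ``immediate from the recursion,'' is the genuinely non-trivial part. (Incidentally, the factorization $\EE_8=\EE_4^2$ announced in your opening plays no role in the argument you actually give.)
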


F. K. C. Rankin and H. P. F. Swinnerton-Dyer \cite{RS70} have proven that
all the zeros of $E_k(z)$ in the standard fundamental domain $\cF $
are in $\cC =\{ z \in \cF  \, : \, \vert z \vert =1 \} \subset \mathbb{F}$.
We recall the following basic facts \cite{O03}.
The modular group $\Gamma:=\func{SL}_2(\mathbb{Z})$ operates on the complex upper half plane $\cH $,
denoted by $\gamma(z)$, where $\gamma \in \Gamma $ and $z \in \cH $. 
The standard fundamental domain $\cF $ is given by
\begin{eqnarray*}\cF   &= &
\left\{ z \in \cH  \, : \, \vert z \vert \geq 1 \text{ and } 0 \leq \func{Re}\left( z\right) \leq 
1/2
\right\} \cup \\ 
& & 
\left\{ z \in \cH  \, : \, \vert z \vert > 1 \text{ and } -
1/2
< \func{Re}\left( z\right) < 0 \right\}.
\end{eqnarray*}
\begin{proposition}[Rankin, Swinnerton-Dyer \cite{RS70}]
\label{prop3}
Let $k \geq 4$ be an even integer. Let $z_k$ be the zero of $E_k$ with the largest imaginary part.
Then
\begin{equation}
z_4 = z_8 = \rho \text{ and } z_k = i \text{ for } k \equiv 2 \pmod{4}.
\end{equation}
All other zeros satisfy $z_k \in \cC  \, \backslash \, \{ i, \rho\}$.
Only for $k=8$ the zero $z_k$ is not simple.
\end{proposition}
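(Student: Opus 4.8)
The plan is to follow the classical argument of Rankin and Swinnerton-Dyer \cite{RS70}, localizing the zeros of $E_k$ on the arc $\cC = \{ z \in \cF : |z| = 1\}$ by a valence/argument-principle count, and then refining this to pin down which zeros land exactly at $i$ and $\rho$ and to analyze their multiplicities. First I would parametrize the arc by $z = e^{i\theta}$ for $\theta \in [\pi/3, \pi/2]$ and consider the real-valued function $F_k(\theta) := e^{ik\theta/2} E_k(e^{i\theta})$; the modular transformation $z \mapsto -1/z$ together with the weight-$k$ functional equation shows that $F_k(\theta)$ is real on this range. The heart of the classical argument is the estimate that, for $z$ on this arc, the dominant terms in the $q$-expansion are $e^{-ik\theta/2}$ and $e^{ik\theta/2}$ (coming from the $n=0$ term and, after the modular inversion, its image), so that $F_k(\theta) = 2\cos(k\theta/2) + (\text{small error})$. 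Counting sign changes of $2\cos(k\theta/2)$ on $[\pi/3,\pi/2]$ gives exactly $\lfloor k/12 \rfloor$ (adjusted by a boundary case depending on $k \bmod 12$) interior zeros of $E_k$ on $\cC$, and the valence formula
\begin{equation}
\nu_\infty(E_k) + \tfrac12 \nu_i(E_k) + \tfrac13 \nu_\rho(E_k) + \sum_{P \neq i,\rho} \nu_P(E_k) = \frac{k}{12}
\end{equation}
forces equality: \emph{all} zeros lie on $\cC$, and there are no zeros at the cusp.

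Next I would determine the behavior at the two elliptic points. Since $E_k$ has integer $q$-expansion coefficients and is fixed (up to the automorphy factor) under the stabilizers of $i$ and $\rho$ in $\Gamma$, the order of vanishing at $i$ is forced to have a definite parity: writing $k = 12m + r$ with $r \in \{0,4,6,8,10,14\}$ (using that $k$ is even and $k \geq 4$, so $r \neq 2$ is impossible — rather $r$ ranges over the admissible residues), one reads off from the valence formula exactly how many units of mass must sit at $i$ and at $\rho$. Concretely, $\nu_\rho(E_k) \equiv -k \pmod 3$ combined with the requirement that the left side be a nonnegative integer multiple pattern pins $\nu_\rho(E_4) = \nu_\rho(E_8) = 1$ and $\nu_\rho(E_k) = 0$ for $k \equiv 2 \pmod 4$; similarly $\nu_i(E_k) \equiv k \pmod 2$ gives $\nu_i(E_k) = 1$ when $k \equiv 2 \pmod 4$ and $\nu_i(E_4) = \nu_i(E_8) = 0$. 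This yields $z_4 = z_8 = \rho$ and $z_k = i$ for $k \equiv 2 \pmod 4$, since $i$ and $\rho$ are the two points of $\cC$ with the largest imaginary part (indeed $\operatorname{Im}(i) = 1 > \operatorname{Im}(\rho) = \sqrt{3}/2 > \operatorname{Im}(z)$ for all other $z \in \cC$), and any zero not among these must, by the count, be a simple zero in the open arc.

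Finally, for the multiplicity claim at $k=8$: here $\nu_\rho(E_8) = 1$ as an order of vanishing \emph{on $\cH$}, but the elliptic point $\rho$ has stabilizer of order $3$ (or $6$), so one must be careful whether ``not simple'' refers to the vanishing order of the classical function $\EE_8(q)$ in the variable $q$. The cleanest route is the identity $E_8 = E_4^2$ from \eqref{identities}: since $E_4$ vanishes to order exactly $1$ at $\rho$ (by the $k=4$ analysis just completed), $E_8 = E_4^2$ vanishes to order exactly $2$ there, hence $z_8 = \rho$ is a double zero, while for every other $k \geq 4$ the preceding count shows $z_k$ is simple. The main obstacle is the error-term estimate that makes $F_k(\theta) = 2\cos(k\theta/2) + O(\text{small})$ rigorous uniformly on the arc — bounding the tail $\sum_{n \geq 1}$ of the $q$-expansion against the two leading terms — and tracking the boundary residue classes $k \bmod 12$ carefully enough that the valence formula closes with no slack; everything else is bookkeeping with the valence formula and the identity $E_8 = E_4^2$.
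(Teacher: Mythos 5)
The paper does not actually prove this proposition: it is quoted from Rankin--Swinnerton-Dyer \cite{RS70} (with the multiplicity refinements recalled at the start of Section 3 via the valence formula), so what you are reconstructing is the argument of the cited source. Your overall plan --- realness of $F_k(\theta)=e^{ik\theta/2}E_k(e^{i\theta})$ on the arc $\cC$, the approximation $F_k(\theta)\approx 2\cos(k\theta/2)$, a sign-change count, and the valence formula --- is indeed exactly that argument, so the route is the right one.

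Two points as written would fail, though. First, your congruences for the elliptic-point orders are wrong: $\nu_i(E_k)\equiv k\pmod 2$ is vacuous for even $k$ (and false as a constraint, since $\nu_i(E_6)=1$), and $\nu_\rho(E_k)\equiv -k\pmod 3$ gives $\nu_\rho(E_4)\equiv 2\pmod 3$, incompatible with $\tfrac13\nu_\rho(E_4)\le \tfrac13$ from the valence formula. The correct constraints, which come from the automorphy factors $i^{k}$ and $e^{\pi i k/3}$ of the stabilizers of $i$ and $\rho$ (integrality of the $q$-coefficients plays no role here), are $\nu_i\equiv k/2\pmod 2$ and $\nu_\rho\equiv k\pmod 3$. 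Alternatively, for $k\in\{4,6,8,10,14\}$ you can dispense with congruences altogether: the valence formula $\tfrac12\nu_i+\tfrac13\nu_\rho+\cdots=k/12<\tfrac43$ has a unique solution in nonnegative integers, which already yields $\nu_\rho(E_8)=2$ (so $E_8=E_4^2$ is a fine shortcut but not needed). Second, the heart of the proof --- the uniform bound showing that on the arc the non-dominant terms of $\tfrac12\sum_{\gcd(c,d)=1}(cz+d)^{-k}$ have modulus summing to less than $2$, so that $F_k$ inherits the sign changes of $2\cos(k\theta/2)$ --- is precisely what \cite{RS70} prove, and you leave it aside as ``the main obstacle''; note moreover that this estimate is only valid for $k\ge 12$, so the small weights must be settled by the valence-formula argument above rather than by the sign-change count, a case split your sketch blurs (and note that $k=12$ is not pinned down by the valence formula plus congruences alone, so it genuinely needs the estimate or a separate argument). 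With the congruences corrected and the estimate proven or cited, your bookkeeping does close and gives the full statement, including that for $k\equiv 0\pmod 4$, $k\ge 12$, the zero of largest imaginary part lies in $\cC\setminus\{i,\rho\}$ and is simple.
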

Further, from \cite{RS70} and Kohnen \cite{K04} we obtain
\begin{corollary} \label{cor:RS}
Let $k \geq 12$ and $k \equiv 0 \pmod{4}$.
Let $k = 12 \, N + s$ for $s \in \{ 0,4,8\}$. Then
$z_k = e^{\frac{1}{2}\pi i \, \varphi}$, where $\varphi \in \left( \frac{N-1}{N},1 \right)$.
\end{corollary}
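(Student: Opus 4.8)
The plan is to locate the zero $z_k$ of $E_k$ on the arc $\mathcal{C}$ more precisely by combining the valence formula with the explicit trigonometric description of the zeros due to Rankin and Swinnerton-Dyer. First I would recall from \cite{RS70} that writing a point of $\mathcal{C}$ as $z = e^{i\theta}$ with $\theta \in [\pi/3, \pi/2]$, the real-analytic function $e^{ik\theta/2} E_k(e^{i\theta})$ is, up to a controlled error term that is small for all $k\ge 12$, equal to $2\cos(k\theta/2)$; hence the zeros of $E_k$ on the interior of the arc are, to good approximation, the solutions of $\cos(k\theta/2)=0$, i.e.\ $k\theta/2 \in \frac{\pi}{2} + \pi\mathbb{Z}$. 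This already shows that the zeros are nearly equally spaced in the variable $\theta$ and that the one with largest imaginary part corresponds to the value of $\theta$ closest to $\pi/2$ from below.

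Next I would make this exact rather than approximate. By the valence formula, $E_k$ has exactly $\lfloor k/12 \rfloor$ zeros on $\mathcal{C}$ when $k\equiv 0,4,8\pmod{12}$ (with the obvious adjustments in the other residue classes, but here $k\equiv 0\pmod 4$ so $s\in\{0,4,8\}$ and the count is exactly $N$ for $k=12N+s$). Rankin and Swinnerton-Dyer's argument in fact shows that these $N$ zeros interlace precisely with the $N$ values $\theta$ for which $k\theta/2$ is an odd multiple of $\pi/2$ inside $(\pi/3,\pi/2]$; the largest such value is $\theta_k$ with $k\theta_k/2 = (2\lfloor k/4\rfloor - 1 + \epsilon)\cdot \frac{\pi}{2}$ for a suitable parity correction $\epsilon \in\{0,1\}$ depending on $s$. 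Solving, $\theta_k = \pi\cdot\frac{m}{k}$ for an integer $m$ with $m/k$ slightly below $1/2$; tracking the exact value of $m$ in each of the three cases $s=0,4,8$ gives $\theta_k$ in the interval $\bigl(\pi\cdot\frac{N-1}{2N},\, \pi\cdot\frac12\bigr)$. Setting $\varphi := 2\theta_k/\pi$ then yields $z_k = e^{i\theta_k} = e^{\frac{1}{2}\pi i\varphi}$ with $\varphi\in\bigl(\frac{N-1}{N},1\bigr)$, which is the claim.

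The one subtlety — and the step I expect to be the main obstacle — is pinning down the parity correction and verifying that the approximation error in $e^{ik\theta/2}E_k(e^{i\theta}) = 2\cos(k\theta/2) + (\text{error})$ is genuinely too small to push the largest zero past the guaranteed interval, uniformly in $k$. Concretely one must check that the error term (a sum over the remaining lattice points, bounded in \cite{RS70}) stays below the gap between consecutive candidate angles near $\theta=\pi/2$, and that at the endpoints $\theta = \pi\frac{N-1}{2N}$ and $\theta=\pi/2$ the sign of $e^{ik\theta/2}E_k$ is forced, so that an odd number of zeros — in fact exactly one — lies strictly between them. This is where Kohnen's refinement \cite{K04}, which controls precisely the location of the extreme zeros and their angular spacing, does the work: it supplies the clean interval $\bigl(\frac{N-1}{N},1\bigr)$ for $\varphi$ rather than the cruder bound one gets from the bare Rankin--Swinnerton-Dyer estimate. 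Once the endpoint sign analysis is in place for each $s\in\{0,4,8\}$, the corollary follows immediately from Proposition~\ref{prop3} together with the valence count.
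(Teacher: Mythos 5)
Your argument is correct and is essentially the paper's own: the corollary is stated there without a separate proof, being obtained exactly as you describe from the Rankin--Swinnerton-Dyer approximation $e^{ik\theta/2}E_k(e^{i\theta})=2\cos(k\theta/2)+R$ with $|R|<2$ for $k\ge 12$, combined with the zero count and Kohnen's refinement. One simplification worth noting: since $4\mid k$, the endpoint $\theta=\pi/2$ is itself one of the sample points $\theta_m=2\pi m/k$ (with $m=k/4$) at which $\cos(k\theta/2)=\pm 1$ and $E_k(i)\neq 0$, so the bare sign-change argument already traps the zero of largest imaginary part in $\varphi\in\left(1-\tfrac{4}{k},\,1\right)\subset\left(\tfrac{N-1}{N},\,1\right)$, and the parity correction you worry about (and most of the Kohnen input) is not actually needed.
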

\begin{theorem}\label{th4}
Let $k$ be a positive integer. Let $k \geq 12$ and $k \equiv 0 \pmod{4}$. Then
$1/\EE_k$ has a $q$-expansion with radius $\vert q_{z_k} \vert$, where $z_k$ is the zero of $E_k$ with the
largest imaginary part. Then
\begin{equation}
\beta_k(n) \, q_{z_k}^{n} + 
\frac{1}{ \Theta(\EE_k)(q_{z_k})} + \frac{1}{ \Theta(\EE_k)(\overline{q}_{z_k})} 
\left(\frac{q_{z_k}}{\overline{q}_{z_k}}\right)^{n}
\end{equation}
constitutes a zero sequence.
\end{theorem}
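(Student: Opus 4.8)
The plan is to extract the singularities of $1/\EE_k$ on its circle of convergence and read off the asymptotics of $\beta_k(n)$ from them. First I would invoke Proposition \ref{prop3}: the zero $z_k$ of $E_k$ with largest imaginary part lies on the arc $\cC$ and, for $k\equiv 0\pmod 4$ with $k\geq 12$, is \emph{simple} (the only non-simple case being $k=8$). Since $E_k$ has real Fourier coefficients, $\overline{z_k}$-translates, more precisely the point $-\overline{z_k}$, also gives a zero, and $q_{z_k}$ and $\overline q_{z_k}=q_{-\overline{z_k}}$ are the two (conjugate, hence equal-modulus) poles of $1/\EE_k(q)$ closest to the origin; by Corollary \ref{cor:RS} these are genuinely distinct since $z_k\neq i$. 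All other zeros of $E_k$ in $\cF$ lie strictly further out, so they produce poles of $1/\EE_k$ of strictly larger modulus. Hence the radius of convergence is $|q_{z_k}|$, as claimed.

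Next I would compute the residues. Near $q_{z_k}$ we have $\EE_k(q) = \EE_k'(q_{z_k})(q-q_{z_k}) + O((q-q_{z_k})^2)$ with $\EE_k'(q_{z_k})\neq 0$ by simplicity, and since $\Theta = q\,\mathrm d/\mathrm dq$ we get $\EE_k'(q_{z_k}) = \Theta(\EE_k)(q_{z_k})/q_{z_k}$. Therefore
\begin{equation}
\func{res}_{q_{z_k}}\!\left(\frac{1}{\EE_k}\right) = \frac{q_{z_k}}{\Theta(\EE_k)(q_{z_k})},
\end{equation}
and the conjugate residue at $\overline q_{z_k}$ is the complex conjugate of this. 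Subtracting the corresponding principal parts, the function
\begin{equation}
g(q) := \frac{1}{\EE_k(q)} - \frac{\func{res}_{q_{z_k}}(1/\EE_k)}{q-q_{z_k}} - \frac{\func{res}_{\overline q_{z_k}}(1/\EE_k)}{q-\overline q_{z_k}}
\end{equation}
is holomorphic on a disc of radius $|q_{z'}|>|q_{z_k}|$, where $z'$ is the zero of $E_k$ with the next-largest imaginary part. Its Taylor coefficients are therefore $O(r^{-n})$ for any $r<|q_{z'}|$. On the other hand, expanding the two explicit simple-pole terms geometrically gives their $n$-th coefficients as $-\func{res}_{q_{z_k}}(1/\EE_k)\,q_{z_k}^{-n-1} - \func{res}_{\overline q_{z_k}}(1/\EE_k)\,\overline q_{z_k}^{-n-1}$. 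Substituting the residue formula above, the $n$-th coefficient of $g$ equals
\begin{equation}
\beta_k(n) + \frac{1}{\Theta(\EE_k)(q_{z_k})}\,q_{z_k}^{-n} + \frac{1}{\Theta(\EE_k)(\overline q_{z_k})}\,\overline q_{z_k}^{-n},
\end{equation}
and multiplying through by $q_{z_k}^{n}$ (which has constant modulus $|q_{z_k}|^{n}$, the radius) yields exactly the displayed expression in the theorem; since it is $O((|q_{z_k}|/|q_{z'}|)^{n})\to 0$, it is a zero sequence.

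The main obstacle is the verification that $z_k$ and $\overline{z_k}$ (equivalently $q_{z_k}$ and $\overline q_{z_k}$) are the \emph{only} zeros of $E_k$ achieving the largest imaginary part in an $\func{SL}_2(\Z)$-orbit sense — i.e. that no other zero on $\cC$ contributes a pole of the same modulus $|q_{z_k}|$, and that $z_k$ is simple. Both facts are supplied by Proposition \ref{prop3} and Corollary \ref{cor:RS} (note $z_k\neq i,\rho$ forces $|q_{z_k}|$ strictly between $e^{-2\pi}$ and $e^{-\pi\sqrt 3}$, and strictly smaller in modulus than the next zero's), so the analytic part reduces to a routine residue and geometric-series argument; the only care needed is handling the two conjugate poles symmetrically and confirming the spectral gap to $|q_{z'}|$ is strict.
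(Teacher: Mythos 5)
Your argument is correct and follows essentially the same route as the paper's proof: use Rankin--Swinnerton-Dyer (Proposition \ref{prop3} and Corollary \ref{cor:RS}) to identify $q_{z_k}$ and $\overline{q}_{z_k}$ as the only two (simple) poles on the circle of convergence, subtract both principal parts, express the residues as $q_{z_k}/\Theta(\EE_k)(q_{z_k})$ and its conjugate, and compare coefficients after multiplying by $q_{z_k}^{n}$. Your explicit decay rate via the next zero $z'$ is a minor sharpening of the paper's statement that the remaining series has strictly larger radius of convergence, but the substance is identical.
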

The expression 
\begin{equation} \label{bound}
\frac{1}{ \Theta(\EE_k)(q_{z_k})} + \frac{1}{ \Theta(\EE_k)(\overline{q}_{z_k})} 
\left(\frac{q_{z_k}}{\overline{q}_{z_k}}\right)^{n}
\end{equation}is bounded. 
But this is not sufficient to obtain an asymptotic expansion.
Nevertheless we have discovered a new
property of the coefficients of $1/ \EE_k$ for $k \equiv 0 \pmod{4}$.
\begin{theorem} \label{th:subsequence}
Let $k \equiv 0 \pmod{4}$ and $k \geq 12$.
Then there exists a subsequence $\{n_t\}_{t=1}^{\infty}$ of $\{n\}_{n=1}^{\infty}$
such that
\begin{equation}
\lim_{t \to \infty}  \frac{\beta_k(n_t)}{
-q_{z_k}^{-n_t} 
\left( 
\frac{1}{ \Theta(\EE_k)(q_{z_k})} + \frac{1}{ \Theta(\EE_k)(\overline{q}_{z_k})} 
\left(\frac{q_{z_k}}{\overline{q}_{z_k}}\right)^{n_t}
\right)} = 1.
\end{equation}
\end{theorem}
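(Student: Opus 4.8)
The plan is to start from Theorem \ref{th4}, which tells us that the sequence
\[
\beta_k(n) + q_{z_k}^{-n}\!\left(\frac{1}{\Theta(\EE_k)(q_{z_k})} + \frac{1}{\Theta(\EE_k)(\overline{q}_{z_k})}\left(\frac{q_{z_k}}{\overline{q}_{z_k}}\right)^{n}\right)
\]
tends to $0$ as $n \to \infty$ (after multiplying the zero sequence in Theorem \ref{th4} by $q_{z_k}^{-n}$, which is legitimate since the asserted zero sequence is $\beta_k(n)q_{z_k}^{n}+(\text{bounded})$ and one checks the error is in fact $o(1)$ after dividing out the modulus; more carefully, one uses that the next singularities of $1/\EE_k$ lie strictly outside $|q_{z_k}|$, so the error term decays geometrically). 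Write $b_n := -q_{z_k}^{-n}\bigl(\tfrac{1}{\Theta(\EE_k)(q_{z_k})} + \tfrac{1}{\Theta(\EE_k)(\overline q_{z_k})}(q_{z_k}/\overline q_{z_k})^{n}\bigr)$, so that Theorem \ref{th4} gives $\beta_k(n) - b_n = o(|q_{z_k}|^{-n})$, in fact $= O(r^{-n})$ for some $r > |q_{z_k}|$. The claim $\beta_k(n_t)/b_{n_t} \to 1$ along a subsequence is then equivalent to producing a subsequence along which $|b_{n_t}|$ stays bounded below by a fixed multiple of $|q_{z_k}|^{-n_t}$, since then $\beta_k(n_t)/b_{n_t} = 1 + (o(|q_{z_k}|^{-n_t}))/b_{n_t} = 1 + o(1)$.

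So the heart of the matter is the following: let $\zeta := q_{z_k}/\overline q_{z_k}$, a complex number of modulus $1$, and let $A := 1/\Theta(\EE_k)(q_{z_k})$, $B := 1/\Theta(\EE_k)(\overline q_{z_k}) = \overline A$ (the conjugacy holding because $\EE_k$ has real Fourier coefficients). Then $|b_n| = |q_{z_k}|^{-n}\,|A + B\zeta^n| = |q_{z_k}|^{-n}\,|A + \overline A \zeta^n|$, and I must show $\liminf_n |A + \overline A \zeta^n| > 0$ along some subsequence — equivalently, that it is bounded below infinitely often by a positive constant. First I would rule out the degenerate case $A = 0$: since $\Theta(\EE_k)(q_{z_k})$ is a residue-type quantity and $z_k$ is a zero of $E_k$, one argues $\Theta(\EE_k)(q_{z_k}) \neq 0$ (it equals $q_{z_k} E_k'(z_k)/(2\pi i)$ up to normalization, and $E_k$ has a simple zero at $z_k$ for $k \neq 8$, which is the case here since $k \geq 12$; simplicity follows from Proposition \ref{prop3}). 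Writing $A = |A|e^{i\alpha}$, we have $|A + \overline A\zeta^n| = |A|\,|e^{i\alpha} + e^{-i\alpha}\zeta^n| = |A|\,|1 + e^{-2i\alpha}\zeta^n|$, so I need $|1 + w^n| \geq c$ infinitely often, where $w := e^{-2i\alpha}\zeta$ lies on the unit circle.

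The key observation is a pigeonhole/equidistribution argument on the circle. If $w$ is a root of unity, then $w^n$ is periodic and takes finitely many values; since $w^n = -1$ cannot hold for all $n$ in a period (that would force $w^2 = 1$ and $w^n$ alternating, still giving $|1+w^n| = 2$ for even $n$), there is a residue class of $n$ along which $|1 + w^n|$ equals a fixed positive value, giving the subsequence. If $w$ is not a root of unity, then $\{w^n\}$ is dense (indeed equidistributed) on the unit circle by Weyl, so in particular $w^n$ enters the arc $\{|1 + \theta| \geq 1\}$ for infinitely many $n$, again producing the subsequence. Either way $\liminf$ along a suitable subsequence of $|1 + w^n|$ is positive, hence so is that of $|b_n|/|q_{z_k}|^{-n}$, and the ratio $\beta_k(n_t)/b_{n_t} \to 1$ follows from the geometric decay of the error term in Theorem \ref{th4}. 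The main obstacle I anticipate is the bookkeeping to upgrade Theorem \ref{th4}'s ``zero sequence'' statement to a genuine geometric error bound $O(r^{-n})$ with $r > |q_{z_k}|$ — this requires knowing that the \emph{second-largest} modulus among singularities of $1/\EE_k$ is strictly smaller, which should follow from Corollary \ref{cor:RS} and the Rankin--Swinnerton-Dyer description of where the zeros of $E_k$ lie on the arc $\cC$; all other zeros have strictly smaller imaginary part than $z_k$ (and its conjugate-partner on $\cF$'s boundary), so the corresponding $|q|$ values are strictly larger, i.e. the singularities are strictly outside the circle of radius $|q_{z_k}|$.
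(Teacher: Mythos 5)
Your proposal is correct in substance but takes a genuinely different route from the paper. The paper reduces the theorem to producing a \emph{nonzero limit point} of the bracketed quantity: it invokes the transcendence of $z_k$ (Kanou, Kohnen) to conclude that $x_k$ is irrational, applies Kronecker's theorem to get that the orbit $\{(q_{z_k}/\overline{q}_{z_k})^n\}$ is dense in the unit circle, so that the set $\{C_k+\overline{C}_k(q_{z_k}/\overline{q}_{z_k})^n\}$ with $C_k=1/\Theta(\EE_k)(q_{z_k})$ is dense in the circle $\partial B_{|C_k|}(C_k)$, then picks any nonzero point $d_k$ on that circle, extracts a subsequence converging to $d_k$, and concludes with Theorem \ref{th4}. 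You only need the bracketed quantity to be bounded away from $0$ along a subsequence, and you obtain this by an elementary dichotomy (periodicity and pigeonhole in the root-of-unity case, Kronecker--Weyl density otherwise), thereby avoiding the transcendence input altogether; you also make explicit the point $\Theta(\EE_k)(q_{z_k})\neq 0$, which follows from the simplicity of $z_k$ for $k\geq 12$ and which the paper uses tacitly. Your upgrade of Theorem \ref{th4} to a geometric error bound $O(r^{-n})$ is legitimate (the remaining singularities lie strictly outside $|q_{z_k}|$), but it is not needed: the zero-sequence statement of Theorem \ref{th4} already gives $(\beta_k(n)-b_n)q_{z_k}^{n}\to 0$, which suffices once $|b_{n_t}q_{z_k}^{n_t}|$ is bounded below.

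One algebraic slip should be repaired. From $|A+\overline{A}\zeta^n|=|A|\,\bigl|1+e^{-2i\alpha}\zeta^n\bigr|$ you pass to $|1+w^n|$ with $w:=e^{-2i\alpha}\zeta$; but $w^n=e^{-2in\alpha}\zeta^n$, which is not $e^{-2i\alpha}\zeta^n$ in general. Run the dichotomy on $\zeta$ itself: if $\zeta$ is a root of unity of order $m$, the values $e^{-2i\alpha}\zeta^j$, $0\leq j<m$, are $m$ distinct points, so at most one equals $-1$; since $0<x_k<1/2$ (Proposition \ref{prop3}, Corollary \ref{cor:RS}) we have $\zeta=e^{4\pi i x_k}\neq 1$, hence $m\geq 2$ and some residue class of $n$ modulo $m$ keeps $\bigl|1+e^{-2i\alpha}\zeta^n\bigr|$ equal to a fixed positive constant; if $\zeta$ is not a root of unity, density of $\{\zeta^n\}$ on the unit circle puts $e^{-2i\alpha}\zeta^n$ in the arc $\{|1+\theta|\geq 1\}$ infinitely often. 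With this correction your argument is complete.
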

The statement of this theorem is equivalent to
\begin{equation}
\lim_{t \to \infty} 
\frac{\beta_k(n_t) }
{- 2 \func{Re} \left( \frac{q_{z_k}^{-n_t}}{ \Theta(\EE_k)(q_{z_k})}\right)}  = 1.
\end{equation}

We further have the following properties.
\begin{theorem}\label{angle}
Let $k$ be a positive integer. Let $k \geq 12$ and $k \equiv 0 \pmod{4}$.
\begin{itemize}
\item[a)]
Let $A_k(n)$ denote the number of changes of sign in the sequence $\{\beta_k(m)\}_{m=0}^n$ 
and let $z_k = x_k + i \, y_k \in \cF $ be the zero of $E_k$ with the largest imaginary part.
Then
\begin{equation*}
\lim_{ n \to \infty} \frac{A_k(n)}{n} = 2 x_k.
\end{equation*}
\item[b)]
Let $B_k(n)$ be the number of non-zero coefficients among the $n$ coefficients
$\{\beta_k(m)\}_{m=0}^{n-1}$. Then
\begin{equation*}
\limsup _{ n \to \infty} \frac{n}{B_k(n)} \leq 2.
\end{equation*}
\end{itemize}
\end{theorem}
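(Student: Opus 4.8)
\textbf{Proof proposal for Theorem \ref{angle}.}

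The plan is to base both parts on the structural picture already established by Theorems \ref{th4} and \ref{th:subsequence}: the coefficient $\beta_k(n)$ is, up to a genuinely negligible error, equal to
$-q_{z_k}^{-n}\left(\tfrac{1}{\Theta(\EE_k)(q_{z_k})} + \tfrac{1}{\Theta(\EE_k)(\overline q_{z_k})}(q_{z_k}/\overline q_{z_k})^n\right)$,
which after writing $q_{z_k} = r\,e^{2\pi i x_k}$ with $r = e^{-2\pi y_k}$ becomes
$r^{-n}\cdot\bigl(-2\func{Re}(e^{-2\pi i n x_k}/\Theta(\EE_k)(q_{z_k}))\bigr)$.
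Thus the sign of $\beta_k(n)$ is governed (for large $n$, where the error is dominated) by the sign of $\cos(2\pi n x_k - \psi)$, where $\psi = \arg\Theta(\EE_k)(q_{z_k})$. For part a) I would first show that $x_k$ is irrational: by Corollary \ref{cor:RS}, $z_k = e^{\frac{1}{2}\pi i\varphi}$ with $\varphi\in\left(\frac{N-1}{N},1\right)$, so $x_k = \cos(\frac{\pi}{2}\varphi)$; rationality of $x_k$ would, via a transcendence/algebraicity argument on the zero of a modular form (or more elementarily, since the only rational values of $\cos$ at rational multiples of $\pi$ are $0,\pm\tfrac12,\pm1$, none of which lie in the relevant subinterval of $(0,1)$ for $k\ge 12$), be impossible. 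Granting irrationality of $x_k$, Weyl equidistribution tells us that $\{2\pi n x_k - \psi \bmod 2\pi\}$ is equidistributed, and the proportion of $n\le N$ for which $\cos$ of this quantity is positive tends to $\tfrac12$; each time this cosine changes sign across consecutive integers $n$ there is a sign change of $\beta_k$, and the number of such crossings of $\cos(2\pi t x_k - \psi)$ as $t$ runs over $[0,n]$ is asymptotic to $2x_k\cdot n$ (the function $t\mapsto 2\pi t x_k$ sweeps through $2\pi x_k n$ total, i.e. $x_k n$ full periods, with two sign changes per period). The remaining work is to control the finitely-contaminated initial segment and the error term: away from the (density-zero, by equidistribution) set of $n$ where $|\cos(2\pi n x_k-\psi)|$ is comparably small to the exponentially decaying relative error, the sign of $\beta_k(n)$ matches the sign of the cosine, so the count $A_k(n)$ differs from the cosine-crossing count by $o(n)$.

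For part b), the number $B_k(n)$ of non-zero coefficients among $\beta_k(0),\dots,\beta_k(n-1)$: the complement consists of $n$ for which $\beta_k(n)=0$, which by the Theorem \ref{th4}/\ref{th:subsequence} picture forces (for large $n$) that $2\func{Re}(e^{-2\pi i n x_k}/\Theta(\EE_k)(q_{z_k}))$ is within the exponentially small error of $0$, i.e. $2\pi n x_k - \psi$ is exponentially close mod $\pi$ to $\pm\pi/2$. By equidistribution the number of such $n\le N$ is $o(N)$ — in fact one expects it to be finite or at worst very sparse, but $o(N)$ suffices. Hence $B_k(n) = n - o(n)$, giving $n/B_k(n)\to 1\le 2$, which is stronger than claimed; the factor $2$ in the statement is a safe cushion that does not require the sharp $o(n)$ bound but only, say, $B_k(n)\ge n/2 + o(n)$, which follows from the fact that on at least half the residues (by equidistribution) the cosine is bounded away from zero. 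I would present the argument so as to obtain the cleaner $\limsup n/B_k(n) = 1$ if the equidistribution estimate is pushed, but state it as in the theorem.

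The main obstacle I anticipate is making rigorous the transition from "the sign of $\beta_k(n)$ equals the sign of $\cos(2\pi n x_k - \psi)$" to the sign-change count, uniformly in $n$. Theorem \ref{th:subsequence} only gives the asymptotic equality along a \emph{subsequence}; to control sign changes for \emph{all} $n$ I need the full-strength statement that the error term in Theorem \ref{th4}, i.e. $\beta_k(n)q_{z_k}^n + (\text{bounded expression})$, is not merely a zero sequence but decays \emph{geometrically} — this should come from re-examining the proof of Theorem \ref{th4} via partial fractions: subtracting the principal parts at $q_{z_k}$ and $\overline q_{z_k}$ leaves a function meromorphic on a strictly larger disc (next-largest-modulus zero of $E_k$, which by Proposition \ref{prop3} lies strictly inside), whose Taylor coefficients decay like $(r/r')^n$ for some $r'>r$. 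Granting that geometric decay, the set of "dangerous" $n$ where $|\cos(2\pi nx_k-\psi)| \lesssim (r/r')^n$ is finite (since the left side is equidistributed and bounded below on a positive-density set while the right side $\to 0$), so for all large $n$ the sign of $\beta_k(n)$ is \emph{exactly} $\func{sgn}\cos(2\pi nx_k - \psi)$, and the counts $A_k(n)$, $B_k(n)$ are pinned down by classical equidistribution of $\{nx_k\}$ with only an $O(1)$ discrepancy from the initial segment. Assembling the Weyl-sum discrepancy bound then yields both limits.
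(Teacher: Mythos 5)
Your proposal is essentially correct, but it takes a genuinely different route from the paper. The paper's proof is little more than a citation of two classical results of P\'olya and Szeg\H{o} (\cite{PS78}, Part Three, Chapter 5): for a), a real power series whose only singularities on its circle of convergence are two conjugate poles $r e^{\pm i\alpha}$ with $0<\alpha<\pi$ has sign-change density $\alpha/\pi$, which with $q_{z_k}=r_k e^{2\pi i x_k}$ gives the limit $2x_k$ at once and needs no information about the arithmetic nature of $x_k$; for b), the number of poles on the circle of convergence is an upper bound for $\limsup n/B(n)$, and here there are two poles. You instead re-prove these facts in the special case at hand: you upgrade Theorem \ref{th4} to \emph{geometric} decay of $\beta_k(n)q_{z_k}^{n}+\frac{1}{\Theta(\EE_k)(q_{z_k})}+\frac{1}{\Theta(\EE_k)(\overline{q}_{z_k})}\left(q_{z_k}/\overline{q}_{z_k}\right)^{n}$ (correct: after subtracting the two principal parts the function is holomorphic on a strictly larger disc, since all other zeros of $E_k$ in the period strip have strictly smaller imaginary part, and Cauchy estimates give the decay), you establish irrationality of $x_k$, and you count sign changes of $\cos(2\pi n x_k-\psi)$ via equidistribution. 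This costs more ingredients than the paper's argument but also buys more: an explicit main term with geometrically small relative error off a density-zero exceptional set, and for b) the stronger conclusion $B_k(n)=n-o(n)$, i.e.\ $\limsup_{n\to\infty} n/B_k(n)=1$, compared with the paper's bound $2$.

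Two cautions. First, your closing claim that the set of ``dangerous'' $n$ with $\left|\cos(2\pi n x_k-\psi)\right|\lesssim (r/r')^{n}$ is \emph{finite} is not justified: equidistribution gives no pointwise lower bound along the full sequence, and excluding infinitely many exponentially small values would require an irrationality-measure bound for $x_k$, which the transcendence results of Kanou and Kohnen do not supply ($x_k$ could a priori be a Liouville number). Finiteness is, however, not needed: your earlier density-zero bound suffices, since modifying the sign data on a set $S_n\subset\{0,\dots,n\}$ of indices changes the sign-change count by at most $O(|S_n|)=o(n)$, and the same density-zero set controls the possible vanishing coefficients in b). Second, the ``elementary'' irrationality argument via rational values of the cosine at rational multiples of $\pi$ is circular, since it presupposes that $\varphi$ in Corollary \ref{cor:RS} is rational, which is not known; use instead the transcendence of $z_k$ (as the paper does in proving Theorem \ref{th:subsequence}), which forces $x_k\notin\mathbb{Q}$ because $z_k=x_k+i\sqrt{1-x_k^{2}}$.
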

We end this section with a considerably surprising result.
\begin{corollary} \label{cor:0}
For large weights $k$ divisible by $4$, the coefficients
of $1/ \EE_k(q)$ satisfy
\begin{equation}
\lim_{\ell \rightarrow \infty }  \lim_{n \rightarrow \infty } \frac{A_{4\ell }(n)}{n} =0.
\end{equation}
\end{corollary}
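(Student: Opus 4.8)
The plan is to combine Theorem~\ref{angle}a) with the description of the extremal zeros $z_{4\ell}$ provided by Corollary~\ref{cor:RS}. By Theorem~\ref{angle}a), for each fixed $\ell$ with $4\ell \geq 12$ we have $\lim_{n \to \infty} A_{4\ell}(n)/n = 2 x_{4\ell}$, where $z_{4\ell} = x_{4\ell} + i\,y_{4\ell} \in \cF$ is the zero of $E_{4\ell}$ of largest imaginary part. Hence the double limit in question equals $2 \lim_{\ell \to \infty} x_{4\ell}$, and it suffices to show that the real part of the extremal zero tends to $0$ as $\ell \to \infty$ along multiples of $4$.

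The key step is therefore to control $x_{4\ell}$ via Corollary~\ref{cor:RS}. Write $4\ell = 12 N + s$ with $s \in \{0,4,8\}$, so that $z_{4\ell} = e^{\frac{1}{2}\pi i \varphi}$ with $\varphi \in \left( \frac{N-1}{N}, 1\right)$. Then $x_{4\ell} = \cos\!\left( \frac{\pi \varphi}{2}\right)$. As $\ell \to \infty$ along multiples of $4$, the corresponding $N = \lfloor 4\ell/12 \rfloor$ tends to $\infty$, so $\frac{N-1}{N} \to 1$ and the interval $\left( \frac{N-1}{N}, 1\right)$ shrinks to the single point $1$; hence $\varphi \to 1$ and $x_{4\ell} = \cos\!\left( \frac{\pi\varphi}{2}\right) \to \cos\!\left( \frac{\pi}{2}\right) = 0$. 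Plugging this into the previous paragraph gives $\lim_{\ell \to \infty} \lim_{n \to \infty} A_{4\ell}(n)/n = 2 \cdot 0 = 0$, as claimed. Note one should check that the zero $z_{4\ell}$ is distinct from $\rho$ and $i$ (so that Theorem~\ref{angle} genuinely applies): this holds for all $k \equiv 0 \pmod 4$ with $k \geq 12$ by Proposition~\ref{prop3} together with Corollary~\ref{cor:RS}, since $\varphi \in \left(\frac{N-1}{N}, 1\right)$ with $N \geq 1$ keeps $z_{4\ell}$ in the interior of the arc $\cC$, away from the corners $\rho = e^{2\pi i/3}$ and $i$.

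The only potential subtlety — and the thing I would state carefully rather than the hard part, since it is not really hard — is the interchange implicit in the phrasing: here there is no interchange of limits to justify, because the inner limit $\lim_n A_{4\ell}(n)/n$ is taken first and produces, for each $\ell$, the genuine number $2 x_{4\ell}$ by Theorem~\ref{angle}a); the outer limit is then an ordinary limit of the real sequence $\{2 x_{4\ell}\}_\ell$, which we have just shown converges to $0$. So the proof is essentially a two-line deduction from Theorem~\ref{angle}a) and Corollary~\ref{cor:RS}, the main content being the elementary observation that $\cos(\pi\varphi/2) \to 0$ as $\varphi \to 1^-$. I would present it in exactly that order: (i) invoke Theorem~\ref{angle}a) to reduce to $\lim_\ell x_{4\ell}$; (ii) invoke Corollary~\ref{cor:RS} to write $x_{4\ell} = \cos(\pi\varphi/2)$ with $\varphi \in (\tfrac{N-1}{N},1)$ and $N \to \infty$; (iii) conclude $x_{4\ell} \to 0$ and hence the stated double limit is $0$.
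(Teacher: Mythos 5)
Your proposal is correct and follows essentially the same route as the paper's own proof: invoke Theorem~\ref{angle}a) to reduce the double limit to $2\lim_{\ell\to\infty}x_{4\ell}$, then use Corollary~\ref{cor:RS} to force $\varphi\to 1$ and hence $x_{4\ell}=\cos(\pi\varphi/2)\to 0$; the paper states exactly this, only more tersely, and your write-up merely supplies the details. (One cosmetic remark: with the paper's normalization of $\cF$ the relevant corner is $\rho=e^{\pi i/3}$ rather than $e^{2\pi i/3}$, but this does not affect your argument.)
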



\section{Proofs}
\subsection{Proof of Corollary \ref{cor1}, Proposition \ref{prop1}, and Theorem \ref{th1}}
We first recall a result from complex analysis.
Let $f(q)=\sum_{n=0}^{\infty} a(n) \, q^n$ be a power series regular at $q=0$ with
finite radius of convergence. Assume that there is only one singular point $q_0$ on the
circle of convergence. Let $q_0$ be a pole. Then it is known (\cite{PS78}, Part 3) that
\begin{equation}\label{lim}
\lim_{n \to \infty} \frac{a(n)}{a(n+1)} = q_0.
\end{equation}
This follows from the Laurent expansion of $f(q)$, which has a finite principal part.

Let $\EE_k(q)$ have
exactly one zero $q_0 \in B_1(0)$ with absolute value smaller than all other
zeros. Then we obtain the property (\ref{lim}) for the coefficients of $1/\EE_k$.
Note that every zero of a modular form has one representative in the fundamental domain $\cF $.

The zeros of $E_k$ are controlled by a
theorem by Rankin and Swinnerton-Dyer (\cite{RS70}, see also
Section \ref{sect2}). They proved that every zero in $\cF $ has absolute value $1$. Further, 
let $k$ be a positive, even integer and $k \geq 4$. Let $ k = 12 N +s$, where $s \in \{ 4,6,8,10,0, 14\}$.
Then $E_k$ has $N$ simple zeros in
$\cC \setminus \{i, \rho\}$. Additionally we have simple zeros 
$\rho$ for $s=4$ and $i$ for $s=6$. Further, $E_k$ has the double zero $\rho$ for $s=8$, the simple zeros $i$ and $\rho$ for $s=10$, and the simple zero $i$ and the double zero $\rho$ for $s=14$.
Further, let $z_k$ be the zero of $E_k$ with the largest imaginary part. Note that 
\begin{equation}
z_k' := J (z_k) = \left( \begin{array}{cc}
0 & -1 \\ 1 & 0 
\end{array}\right) z_k
\end{equation}
and $z_k$ have
the same imaginary part. Note that $J(i)= i$ and $J(\rho) = \rho -1$.
Thus, $1/\EE_k$ has exactly one pole on the radius of convergence iff $z_k= i$ or $z_k = \rho$.
\newline \
\begin{proof}[Proof of Corollary \ref{cor1}]
From the theorem of Rankin and Swinnerton-Dyer we obtain that for $k \equiv 2 \pmod{4}$ 
we have $z_k=i$ and $q_i = e^{-2 \pi}$.
This gives a first proof of Corollary \ref{cor1} (\ref{cor1:prop1}).
Corollary \ref{cor1} (\ref{cor1:prop1}) also follows
directly from Theorem \ref{th1}.
The quotients for small $k$ converge very quickly. 
We refer to Table \ref{Table 1} and Table \ref{more:quotients}.
\begin{center}
{\small
\begin{minipage}[t]{0.97\textwidth}
\begin{tabular}{|r||r|r|r|r|r|}
\hline
$n$ &$\frac{\beta _{8}\left( n\right) }{\beta _{8}\left( n+1\right) }\approx $& $\frac{\beta _{10}\left( n\right) }{\beta _{10}\left( n+1\right) }\approx $ & $\frac{\beta _{12}\left( n\right) }{\beta _{12}\left( n+1\right) }\approx $ & $\frac{\beta _{14}\left( n\right) }{\beta _{14}\left( n+1\right) }\approx $ & $\frac{\beta _{16}\left( n\right) }{\beta _{16}\left( n+1\right) }\approx $ \\ \hline \hline
$17$&$-4.1044\cdot 10^{-3}$&$1.8674\cdot 10^{-3}$&$8.3715\cdot 10^{-4}$&$1.8674\cdot 10^{-3}$&$1.6465\cdot 10^{-3}$\\ \hline
$18$&$-4.1159\cdot 10^{-3}$&$1.8674\cdot 10^{-3}$&$-5.9626\cdot 10^{-3}$&$1.8675\cdot 10^{-3}$&$-1.7502\cdot 10^{-2}$\\ \hline
$19$&$-4.1263\cdot 10^{-3}$&$1.8674\cdot 10^{-3}$&$8.8114\cdot 10^{-4}$&$1.8674\cdot 10^{-3}$&$2.3584\cdot 10^{-4}$\\ \hline
$20$&$-4.1357\cdot 10^{-3}$&$1.8674\cdot 10^{-3}$&$-5.6773\cdot 10^{-3}$&$1.8674\cdot 10^{-3}$&$3.8543\cdot 10^{-3}$\\ \hline
$21$&$-4.1443\cdot 10^{-3}$&$1.8674\cdot 10^{-3}$&$9.2572\cdot 10^{-4}$&$1.8674\cdot 10^{-3}$&$-1.8095\cdot 10^{-3}$\\ \hline
\end{tabular}
\captionsetup{margin={0cm,0cm,0cm,0cm}}
\captionof{table}{\label{more:quotients}Quotients of successive coefficients of 
$1/\EE _{k}$ for $k\in \left\{ 8,10,12,14,16\right\} $.}
\end{minipage}}
\end{center}
Since $\Theta( \EE_6) (q_i) = - \frac{1}{2} \EE_4(q_i)^2$ and 
$\Theta( \EE_{10}) (q_i) = - \frac{1}{2} \EE_4(q_i)^3$, the second part of the Corollary also follows
from Theorem \ref{th1} and (\ref{identities}).
An approximate numerical value of $\EE_4(q_i)$ can be read off Table \ref{E4qi}.
The theorem by Nesterenko implies that this number is transcendental, since $\EE_6(q_i)=0$.
\end{proof}
\begin{center}
{\small
\begin{minipage}[t]{0.5\textwidth}
\begin{tabular}{|r||r|}
\hline
$n$ & $\frac{\beta_{6}(n)}{\beta_{10}(n)} \approx \phantom{xxxxxxxxxx}$\\ \hline \hline
$0$ & $1.000000000000000000000000000000$ \\ \hline
$1$ & $1.909090909090909090909090909091$ \\ \hline
$2$ & $1.319410319410319410319410319410$ \\ \hline
$3$ & $1.523715744177431256188987060285$ \\ \hline
$4$ & $1.428309534304946335598514019013$ \\ \hline
$\vdots$  & $\cdots      \phantom{xxxxxxxxxxxxx}$ \\ \hline 
$80$ & $1.455762892268709322462422003594$ \\ \hline
$90$ & $1.455762892268709322462422003599$ \\ \hline
$100$ & $1.455762892268709322462422003599$ \\ \hline 
\end{tabular}
\captionsetup{margin={0cm,0cm,0cm,0cm}}
\captionof{table}{\label{E4qi}Quotients of $\beta _{6}\left( n\right) $ and $\beta _{10}\left( n\right) $.}
\end{minipage}}
\end{center}
Note that for each integer $\ell \geq 2$,  the limit as $n \to  \infty $ of 
$\frac{\beta_{4\ell -2}(n)}{\beta_{4\ell +2}(n)}$ exists, but it is  generally
not equal to $ \EE_4(q_i)$.
\begin{proof}
[Proof of Proposition \ref{prop1}]
Since $\left( 1/\EE _{4}\right) ^m$ has only the pole $q_{\rho}$ on the circle of convergence, again 
we have formula (\ref{lim}), which proves the proposition.
\end{proof}
\begin{proof}
[Proof of Theorem \ref{th1}]
Let $w$ be any complex number.
Let $B_r(w)= \{ z \in \mathbb{C} \, : \, \vert z - w \vert < r\}$ 
be the open ball with radius $r$ around $w$.
We denote the closure by $\overline{B_{r}\left( w\right) }$ and its boundary
by $\partial B_{r}\left( w\right) $.
Let $k \equiv 2 \pmod{4}$. 
Then $\EE _k$ has the special property that restricted to $\overline{B_{\vert q_i \vert }(0)}$ 
it has exactly one zero at $q_i$, which is also simple. This implies that the Taylor series expansion of the reciprocal of $\EE _k$
has  radius of convergence $\left| q_{i}\right| $ and only
a simple pole at $q_i$:
\begin{equation}
\frac{1}{ \EE _k(q)}  = \sum_{n=0}^{\infty} \beta_k(n) \, q^n  \qquad ( \vert q \vert  < \vert q_i \vert).
\end{equation}
Note that subtracting the principal part at $q_i$ provides a new Taylor series expansion with a larger radius of convergence:
\begin{equation}
\frac{1}{ \EE _k(q)} -  \frac{ \func{res}_{q_i} (1/\EE _k)}{q - q_i} = \sum_{n=0}^{\infty} b(n) \, q^n.
\end{equation}
This implies that $b(n) q_i^n$ constitutes
a zero sequence. Here, $\func{res}_{q_i} (1/\EE _k)$ denotes the residue at the pole $q_i$.
We obtain that 
\begin{equation}
q_i^{n+1} \beta_k(n) + \func{res}{}_{q_i} (1/\EE _k)
\end{equation}
constitutes a zero sequence. By a standard argument, we obtain that 
\begin{equation}
\func{res}{}_{q_i} (1/\EE _k) = \frac{1}{\frac{\mathrm{d}}{\mathrm{d}q} \EE _k (q_i)}.
\end{equation}
Finally, we obtain the asymptotic behavior
\begin{equation}
\beta_k(n) \sim - \frac{1}{\Theta (\EE _k)(q_i)}\, q_i^{-n}.
\end{equation}
\end{proof}
\subsection{Proof of Theorem \ref{th2}}
We use the following easy to prove lemmata.

\begin{lemma}
\label{sigma}$\sigma _{\ell }\left( n\right) <\frac{\ell }{\ell -1}n^{\ell }$
for $\ell >1$ and $\sigma _{1}\left( n\right) \leq \left( 1+\ln n\right) n$.
\end{lemma}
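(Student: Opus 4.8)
The plan is to rewrite each divisor sum in terms of the complementary divisors, bound the resulting reciprocal sum by an infinite series (a zeta value, respectively a harmonic sum), and then estimate that series by an elementary integral comparison.

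First I would use the involution $d \mapsto n/d$ on the set of divisors of $n$ to write
\begin{equation}
\sigma_\ell(n) = \sum_{d \mid n} d^\ell = n^\ell \sum_{d \mid n} \frac{1}{d^\ell}.
\end{equation}
Since every divisor of $n$ lies in $\{1,\dots,n\}$, the inner sum is at most $\sum_{j=1}^\infty j^{-\ell}$ when $\ell > 1$. To bound this series I would compare with an integral: for $j \geq 2$ one has $j^{-\ell} < \int_{j-1}^{j} x^{-\ell}\,dx$, hence
\begin{equation}
\sum_{j=1}^\infty \frac{1}{j^\ell} < 1 + \int_1^\infty x^{-\ell}\,dx = 1 + \frac{1}{\ell-1} = \frac{\ell}{\ell-1},
\end{equation}
which combined with the previous display gives $\sigma_\ell(n) < \frac{\ell}{\ell-1}\,n^\ell$.

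For $\ell = 1$ the same rearrangement yields $\sigma_1(n) = n \sum_{d\mid n} 1/d \leq n \sum_{j=1}^n 1/j = n\,H_n$, and the integral comparison $H_n \leq 1 + \int_1^n x^{-1}\,dx = 1 + \ln n$ delivers $\sigma_1(n) \leq (1+\ln n)\,n$.

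There is essentially no obstacle here; the only points requiring a little care are keeping the inequality strict in the first part (which is why one compares $j^{-\ell}$ with $\int_{j-1}^{j}$ rather than $\int_{j}^{j+1}$, so the strict bound already appears at $j=2$) and observing that the divisor $1$ always occurs, so the isolated term $1$ in the estimate is genuinely attained and nothing is lost in the bound.
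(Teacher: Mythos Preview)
Your proof is correct and is essentially the same as the paper's: both factor $\sigma_\ell(n)=n^\ell\sum_{d\mid n}d^{-\ell}$ and bound the reciprocal sum by $1$ plus an integral of $x^{-\ell}$. The only cosmetic difference is that you route the $\ell>1$ case through the infinite series $\sum_{j\ge 1}j^{-\ell}$ before applying the integral comparison, whereas the paper compares directly with $1+\int_1^n t^{-\ell}\,\mathrm{d}t$; the strictness and the final constants come out identically.
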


\begin{proof}
$\sigma _{\ell }\left( n\right) \leq \left( 1+\int _{1}^{n}t^{-\ell }\,\mathrm{d}t\right) n^{\ell }<\frac{\ell }{\ell -1}n^{\ell }$ for $\ell >1$ and
$\leq \left( 1+\ln n\right) n$ for $\ell =1$.
\end{proof}

\begin{lemma}
\label{abschaetzung}For $\ell \geq 5$
holds  \label{unten}$3\sqrt[\ell ]{\frac{1+3^{-\ell }}{1+2^{-\ell }}}>2.98$.
\end{lemma}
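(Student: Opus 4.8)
\textbf{Plan for the proof of Lemma \ref{abschaetzung}.}

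The claim $3\sqrt[\ell]{\frac{1+3^{-\ell}}{1+2^{-\ell}}}>2.98$ for $\ell\ge 5$ is a purely elementary estimate, and I would dispatch it with a monotonicity argument followed by a single verification at the base case $\ell=5$. First I would rewrite the quantity as $f(\ell):=3\bigl(\tfrac{1+3^{-\ell}}{1+2^{-\ell}}\bigr)^{1/\ell}$ and observe that the factor inside the root satisfies $0<\tfrac{1+3^{-\ell}}{1+2^{-\ell}}<1$ for every $\ell\ge 1$, since $3^{-\ell}<2^{-\ell}$. Hence the $\ell$-th root of a number in $(0,1)$ increases towards $1$ as $\ell\to\infty$; I would make this precise by checking that $\ell\mapsto\bigl(\tfrac{1+3^{-\ell}}{1+2^{-\ell}}\bigr)^{1/\ell}$ is increasing in $\ell$, equivalently that $g(\ell):=\tfrac{1}{\ell}\bigl(\ln(1+3^{-\ell})-\ln(1+2^{-\ell})\bigr)$ is increasing. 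Both $-\tfrac1\ell\ln(1+2^{-\ell})$ and $\tfrac1\ell\ln(1+3^{-\ell})$ tend to $0$ from the appropriate side and their derivatives can be bounded crudely; since $\ln(1+x)\le x$, we have $0<-g(\ell)\le \tfrac{1}{\ell}\,2^{-\ell}$, which already shows $g(\ell)\to 0^-$ and, more importantly, that $f(\ell)\ge 3\exp\!\bigl(-\tfrac{1}{\ell}2^{-\ell}\bigr)$.

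With that last bound in hand the lemma reduces to a one-line numerical check: for $\ell\ge 5$ we have $\tfrac1\ell 2^{-\ell}\le \tfrac{1}{5\cdot 32}=\tfrac{1}{160}$, so $f(\ell)\ge 3e^{-1/160}>3(1-\tfrac{1}{160})=3-\tfrac{3}{160}=2.98125>2.98$, using $e^{-x}\ge 1-x$. This handles all $\ell\ge 5$ simultaneously, so in fact no separate base case is even required; alternatively one can simply evaluate $f(5)=3\sqrt[5]{\tfrac{1+3^{-5}}{1+2^{-5}}}=3\sqrt[5]{\tfrac{244/243}{33/32}}\approx 2.9814$ directly and then invoke monotonicity in $\ell$ for $\ell\ge 6$.

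I do not anticipate a genuine obstacle here; the only point requiring a little care is justifying the monotonicity (or, as above, bypassing it via the explicit bound $f(\ell)\ge 3e^{-2^{-\ell}/\ell}$), and making sure the crude inequalities $\ln(1+x)\le x$ and $e^{-x}\ge 1-x$ are applied in the direction that gives a lower bound for $f(\ell)$. The constant $2.98$ is clearly chosen with a comfortable margin below the true infimum $3\cdot\frac{244/243}{33/32}{}^{1/5}\approx 2.981$ over $\ell\ge 5$, so the estimate is robust and the proof will be two or three lines.
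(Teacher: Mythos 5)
Your proposal is correct, and its core argument is genuinely different from the paper's. The paper proves the lemma by treating $\ell$ as a real variable, computing the logarithmic derivative of $\frac{1}{\ell}\ln\bigl(\frac{1+3^{-\ell}}{1+2^{-\ell}}\bigr)$, showing it is positive for $\ell\geq 5$, and then taking the smallest value at $\ell=5$ (numerically about $2.984$). Your route bypasses the monotonicity/calculus step entirely: from $\ln(1+x)\leq x$ you get the uniform bound $3\bigl(\frac{1+3^{-\ell}}{1+2^{-\ell}}\bigr)^{1/\ell}\geq 3e^{-2^{-\ell}/\ell}\geq 3e^{-1/160}\geq 3\bigl(1-\frac{1}{160}\bigr)=2.98125>2.98$ for all $\ell\geq 5$ at once, with no base-case evaluation and no derivative computation. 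This is shorter and more robust than the paper's argument (which tacitly relies on the numerical value at $\ell=5$ and on a somewhat delicate sign analysis of the derivative), at the mild cost of proving a weaker lower bound ($2.98125$ instead of the true minimum $\approx 2.984$) — which is all the lemma asks for. Two small caveats: your heuristic opening remark that "the $\ell$-th root of a number in $(0,1)$ increases towards $1$" is not automatic here since the radicand also varies with $\ell$ (you rightly note this needs the derivative argument, and then sidestep it), and your numerical values $f(5)\approx 2.9814$ and "infimum $\approx 2.981$" are slightly off (the value at $\ell=5$ is $\approx 2.9840$); neither affects the rigorous chain of inequalities, which is what carries the proof.
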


\begin{proof}
  Considering $\ell $
as a real variable $\geq 5$, we obtain the following logarithmic derivative
\begin{eqnarray*}
&&\frac{\mathrm{d}}{\mathrm{d}\ell }\frac{1}{\ell }\ln \left( \frac{1+3^{-\ell }}{1+2^{-\ell }}\right) \\
&=&-\frac{1}{\ell ^{2}}\ln \left( \frac{1+3^{-\ell }}{1+2^{-\ell }}\right) +\frac{1}{\ell }\frac{1+2^{-\ell }}{1+3^{-\ell }}\left( -\frac{3^{-\ell }\ln 3}{1+3^{-\ell }}+\frac{2^{-\ell }\ln 2}{1+2^{-\ell }}\right) >0
\end{eqnarray*}
since
$-\frac{\ln 3}{3^{\ell }+1}+\frac{\ln 2}{2^{\ell }+1}>-\frac{\ln 3}{3^{\ell }}+\frac{\ln 2}{2^{\ell +1}}>0$
for $\ell \geq 5$.
Therefore, the values of the original sequence are increasing
and we take the smallest value for $\ell =5$.
\end{proof}

\begin{proof}[Proof of Theorem \ref{th2}]
With
$\varepsilon _{k}\left( n\right) =\frac{2k}{B_{k}}\sigma _{k-1}\left( n\right) $ we
obtain
\[
E_{k}\left( z\right) =1-\sum _{n=1}^{\infty }\varepsilon _{k}\left( n\right) q^{n}.
\]
Let
$1/\left( 1-\varepsilon _{k}\left( 1\right) q-\varepsilon _{k}\left( 2\right) q^{2}\right) =\sum _{n=0}^{\infty }\alpha _{k}\left( n\right) q^{n}$.
The $\alpha _{k}\left( n\right) $
fulfill the recurrence relation
$\alpha _{k}\left( n\right) =\varepsilon _{k}\left( 1\right) \alpha _{k}\left( n-1\right) +\varepsilon _{k}\left( 2\right)  \alpha _{k}\left( n-2\right) $
for $n\geq 2$.
Obviously, $\alpha _{k}\left( 0\right) =\beta _{k}\left( 0\right) $,
$\alpha _{k}\left( 1\right) =\beta _{k}\left( 1\right) $,
and by induction
$\alpha _{k}\left( n\right) =\varepsilon _{k}\left( 1\right) \alpha _{k}\left( n-1\right) +\varepsilon _{k}\left( 2\right) \alpha _{k}\left( n-2\right) \leq \sum _{j=1}^{n}\varepsilon _{k}\left( j\right) \beta _{k}\left( n-j\right) =\beta _{k}\left( n\right) $
using the power series expansion of $1/\EE _{k}$.

For the upper bound let
$a_{2}=\sqrt{7/3}$ and for $k\geq 6$ let
$a_{k}=\frac{\varepsilon _{k}\left( 3\right) }{\varepsilon _{k}\left( 2\right) }=\frac{\sigma _{k-1}\left( 3\right) }{\sigma _{k-1}\left( 2\right) }=\frac{3^{k-1}+1}{2^{k-1}+1}$.
For all $k\equiv 2\pmod{4}$ let
$b_{k}=a_{k}+\varepsilon _{k}\left( 1\right) $,
$c_{k}=\varepsilon _{k}\left( 2\right) -a_{k}\varepsilon _{k}\left( 1\right) $,
and
$\frac{1-b_{k}q-c_{k}q^{2}}{1-a_{k}q}=1-\sum _{n=1}^{\infty }\delta _{k}\left( n\right) q^{n}$.
Therefore, $\delta _{k}\left( 1\right) =b_{k}-a_{k}=\varepsilon _{k}\left( 1\right) $,
$\delta _{k}\left( 2\right) =c_{k}+a_{k}\delta _{k}\left( 1\right) =\varepsilon _{k}\left( 2\right) $,
and
$\delta _{k}\left( n\right) =a_{k}\delta _{k}\left( n-1\right) $ for $n\geq 3$. Therefore
$\delta _{k}\left( n\right) =\varepsilon _{k}\left( 2\right) a_{k}^{n-2}$.

\begin{enumerate}
\item  First, let $k=2$. Then
$\delta _{2}\left( n\right) =72\left( 7/3\right) ^{\left( n-2\right) /2}$.
For $n\in \left\{ 3,4,5,6\right\} $ we obtain
$24\sigma _{1}\left( n\right) \leq \delta _{2}\left( n\right) $. Using
Lemma~\ref{sigma} we obtain $\varepsilon _{2}\left( n\right) \leq 24\left( 1+\ln n\right) n$.
For $n=7$ we obtain
$24\cdot \left( 1+\ln 7\right) \cdot 7<504<72\left( 7/3\right) ^{\left( 7-2\right) /2}$
and for $n\geq 7$ we obtain
$\frac{1+\ln \left( n+1\right) }{1+\ln n}\frac{n+1}{n}\leq \left( 1+\frac{\ln \left( 1+\frac{1}{7}\right) }{1+\ln n}\right) \frac{8}{7}<1.2<\sqrt{7/3}$.
Therefore, $\varepsilon _{2}\left( n\right) \leq \delta _{2}\left( n\right) $.

\item  Now, let $k\geq 6$
then
\[
\delta _{k}\left( n\right) =\varepsilon _{k}\left( 3\right) a_{k}^{n-3}=\frac{2k}{B_{k}}\left( 3^{k-1}+1\right) \left( \left( \frac{3}{2}\right) ^{k-1}\frac{1+3^{1-k}}{1+2^{1-k}}\right) ^{n-3}.
\]
Using Lemma~\ref{sigma} we obtain
$\sigma _{k-1}\left( n\right) <\frac{k-1}{k-2}n^{k-1}$. Since $k\geq 6$
by Bernoulli's
inequality $\frac{k-1}{k-2}\leq \frac{5}{4}=1+\frac{1}{4}<\left( 1+\frac{1}{20}\right) ^{5}
\leq \left( \frac{21}{20}\right) ^{k-1}$.
Therefore
\[
\sqrt[k-1]{\frac{B_{k}}{2k}\varepsilon _{k}\left( n\right) }=\sqrt[k-1]{\sigma _{k-1}\left( n\right) }<\sqrt[k-1]{\frac{k-1}{k-2}n^{k-1}}<\frac{21}{20}n.
\]
Using Lemma~\ref{abschaetzung} implies
$\sqrt[k-1]{\frac{B_{k}}{2k}\delta _{k}\left( n\right) }>2.98\left( \frac{3}{2}\right) ^{n-3}$.
Now $\frac{21}{20}n<2.98\left( \frac{3}{2}\right) ^{n-3}$ for $n\geq 4$ as
$4.2<4.47$ for $n=4$ and $\frac{n}{n-1}<\frac{3}{2}$
for $n>4$.
\end{enumerate}

We have shown 
$\varepsilon _{k}\left( n\right) =\delta _{k}\left( n\right) $
for $n\in \left\{ 1,2\right\} $ and
$\varepsilon _{k}\left( n\right) \leq \delta _{k}\left( n\right) $ for
all $n\geq 3$.
Let now
$\frac{1-a_{k}q}{1-b_{k}q-c_{k}q^{2}}=\sum _{n=0}^{\infty }\gamma _{k}\left( n\right) q^{n}$.
Then $\beta _{k}\left( n\right) =\gamma _{k}\left( n\right) $
for $n\in \left\{ 1,2\right\} $ and by induction
$\gamma _{k}\left( n\right) =\sum _{j=1}^{n}\delta _{k}\left( j\right) \gamma _{k}\left( n-j\right) \geq \sum _{j=1}^{n}\varepsilon _{k}\left( j\right) \beta _{k}\left( n-j\right) =\beta _{k}\left( n\right) $
for $n\geq 3$.

We have shown 
$\alpha _{k}\left( n\right) \leq \beta _{k}\left( n\right) \leq \gamma _{k}\left( n\right) $
for all $n\geq 1$. From the generating functions we can now
determine formulas
for $\alpha _{k}\left( n\right) $ and $\gamma _{k}\left( n\right) $.
The characteristic equation for $\alpha _{k}\left( n\right) $ is
$\lambda _{k}^{2}-\varepsilon _{k}\left( 1\right) \lambda _{k}-\varepsilon _{k}\left( 2\right) =0$.
Let
$\Delta _{k}=\varepsilon _{k}\left( 1\right) ^{2}+4\varepsilon _{k}\left( 2\right) =\left( \frac{2k}{B_{k}}\right) ^{2}+\frac{8k}{B_{k}}\left( 2^{k-1}+1\right) $.
Then
$\lambda _{k,\pm }=\frac{1}{2}\left( \varepsilon _{k}\left( 1\right) \pm \sqrt{\Delta _{k}}\right) $.
We obtain
$\left(
\begin{array}{c}
L_{k,+} \\
L_{k,-}
\end{array}
\right) =\left(
\begin{array}{cc}
1 & 1 \\
\lambda _{k,+} & \lambda _{k,-}
\end{array}
\right) ^{-1}\left(
\begin{array}{c}
1 \\
\varepsilon _{k}\left( 1\right) 
\end{array}
\right) =\frac{1}{\lambda _{k,+}-\lambda _{k,-}}\left(
\begin{array}{c}
\varepsilon _{k}\left( 1\right)  -\lambda _{k,-} \\
\lambda _{k,+}-\varepsilon _{k}\left( 1\right) 
\end{array}
\right) =\frac{1}{\sqrt{\Delta _{k}}}\left(
\begin{array}{c}
\lambda _{k,+} \\
-\lambda _{k,-}
\end{array}
\right) $.
Therefore,
$\alpha _{k}\left( n\right) =L_{k,+}\lambda _{k,+}^{n}+L_{k,-}\lambda _{k,-}^{n}=\frac{\lambda _{k,+}^{n+1}-\lambda _{k,-}^{n+1}}{\sqrt{\Delta _{k}}}$
for all $n$.

The characteristic equation for $\gamma _{k}\left( n\right) $ is
$\mu _{k}^{2}-b_{k}\mu _{k}-c_{k}=0$.
Let
$D _{k}=b_{k}^{2}+4c_{k}$.
Then
$\mu _{k,\pm }=\frac{1}{2}\left( b_{k}\pm \sqrt{D_{k} }\right) $,
\[
\left(
\begin{array}{c}
M_{k,+} \\
M_{k,-}
\end{array}
\right) =\left(
\begin{array}{cc}
1 & 1 \\
\mu _{k,+} & \mu _{k,-}
\end{array}
\right) ^{-1}\left(
\begin{array}{c}
1 \\
\varepsilon _{k}\left( 1\right) 
\end{array}
\right) =\frac{1}{\sqrt{D_{k} }}
\left(
\begin{array}{c}
\varepsilon _{k}\left( 1\right) -\mu _{k,-} \\
\mu _{k,+}-\varepsilon _{k}\left( 1\right) 
\end{array}
\right) ,
\]
and
$\gamma _{k}\left( n\right) =M_{k,+}\mu _{k,+}^{n}+M_{k,-}\mu _{k,-}^{n}$.
\end{proof}

\begin{example}[Slight improvement of \cite{HR18B}]
Let $k=6$. Then
\begin{eqnarray*}
\alpha _{6}\left( n\right) &=&\frac{1}{\sqrt{320544}}\left( \left( \frac{504+\sqrt{320544}}{2}\right) ^{n+1}-\left( \frac{504-\sqrt{320544}}{2}\right) ^{n+1}\right) \\
&\approx &\frac{1}{566.16}\left( 535.08^{n+1}-\left( -31.083\right) ^{n+1}\right) .
\end{eqnarray*}
With $x_{0}=\frac{12}{B_{6}}=504$,
$a_{6}=\frac{244}{33}$,
$b_{6}=\frac{16876}{33}$,
$c_{6}=\frac{141960}{11}$,
$D_{6}=\frac{341015536}{1089}$
and $\sqrt{D_{6}}\approx 559.59$ we obtain
$\mu _{6,\pm }
=
\frac{b_{6}\pm \sqrt{D_{6}}}{2}
$,
\begin{equation*}
M_{6,+}
=
\frac{1}{\sqrt{D_{6}}}\left( x_{0}-\frac{b_{6}-\sqrt{D_{6}}}{2}\right)
,\qquad
M_{6,-}=\frac{1}{\sqrt{D_{6}}}\left( \frac{b_{6}+\sqrt{D_{6}}}{2}-x_{0}\right)
.
\end{equation*}
By (\ref{eq:untereschranke})
this finally yields
\begin{equation*}
\gamma_{6}\left( n\right) =M_{6,+}\mu _{6,+}^{n}+M_{6,-}\mu _{6,-}^{n}
\approx \frac{528.10\cdot 535.49^{n}+31.494\cdot \left( -24.100\right) ^{n}}{559.59}
.
\end{equation*}
The second and last column in Table \ref{HRbounds} are the
lower and upper bounds from \cite{HR18B}. 
\begin{table}
\[
\begin{array}{|c||c|c|c|c|c|}
\hline
n&\frac{535 ^{n+1}-\left( -31\right) ^{n+1}}{566} &\alpha _{6}\left( n\right) &\beta _{6}\left( n\right) &\gamma _{6}\left( n\right) &\frac{352 \cdot 535.5^{n}+21\left( -24\right) ^{n}}{373} \\ \hline \hline
1&5.0400\cdot 10^{2}&5.0400\cdot 10^{2}&5.0400\cdot 10^{2}&5.0400\cdot 10^{2}&5.0400\cdot 10^{2}\\ \hline
2&2.7060\cdot 10^{5}&2.7065\cdot 10^{5}&2.7065\cdot 10^{5}&2.7065\cdot 10^{5}&2.7065\cdot 10^{5}\\ \hline
3&1.4474\cdot 10^{8}&1.4479\cdot 10^{8}&1.4491\cdot 10^{8}&1.4491\cdot 10^{8}&1.4491\cdot 10^{8}\\ \hline
4&7.7438\cdot 10^{10}&7.7475\cdot 10^{10}&7.7600\cdot 10^{10}&7.7600\cdot 10^{10}&7.7602\cdot 10^{10}\\ \hline
5&4.1429\cdot 10^{13}&4.1456\cdot 10^{13}&4.1554\cdot 10^{13}&4.1554\cdot 10^{13}&4.1556\cdot 10^{13}\\ \hline
6&2.2165\cdot 10^{16}&2.2182\cdot 10^{16}&2.2252\cdot 10^{16}&2.2252\cdot 10^{16}&2.2253\cdot 10^{16}\\ \hline
7&1.1858\cdot 10^{19}&1.1869\cdot 10^{19}&1.1916\cdot 10^{19}&1.1916\cdot 10^{19}&1.1917\cdot 10^{19}\\ \hline
8&6.3441\cdot 10^{21}&6.3511\cdot 10^{21}&6.3807\cdot 10^{21}&6.3809\cdot 10^{21}&6.3813\cdot 10^{21}\\ \hline
9&3.3941\cdot 10^{24}&3.3983\cdot 10^{24}&3.4168\cdot 10^{24}&3.4169\cdot 10^{24}&3.4172\cdot 10^{24}\\ \hline
\end{array}
\]
\caption{\label{HRbounds} Improvement of upper and lower bounds (approximation) for $\beta_6(n)$.}
\end{table}
\end{example}
\subsection{Proof of Theorem \ref{th3}}
For the special case of $k=4$ we refer to a result of
\cite{HN20B}. We have proven 
that $(-1)^n \beta_4(n) \in 240 \, \mathbb{N}$ for all $n \in \mathbb{N}$
(see also \cite{AKN97}, last section, for an announcement of the result of
strict sign changes).
We are mainly interested in the implication $\beta_4(n) \neq 0$.  
\newline
\begin{proof}[Proof of Theorem \ref{th3}]
Let $k=4$. Then $z_4= \rho$ and $J(z_4) = \rho -1$. This implies
that $1/\EE_4(q) = \sum_{n=0}^{\infty} \beta_4(n) \, q^n$ has $\vert q_{\rho} \vert$ as the
radius of convergence. Further, the only singularity on the circle of convergence is given by the pole
$q_{\rho}$. Now we can proceed as in the proof of Theorem \ref{th1} and obtain the asymptotic expansion of $\beta_4(n)$. Here we use the
fact that $ \func{res}_{q_{\rho }} \EE_4^{-1}$ is equal to 
\begin{equation}
\frac{q_{\rho}}{\Theta\left( \EE _{4}\right) \left( q_{\rho}\right) }= \frac{ -3 \, q_{\rho}}{\EE_6(q_{\rho})}.
\end{equation}
\end{proof}

\subsection{Proof of Theorem \ref{th4} and Theorem \ref{th:subsequence}}
\begin{proof}
[Proof of Theorem \ref{th4}]
Let $k \equiv 0 \pmod{4}$. We are interested in the zeros
of $E_k$ which contribute to poles on the circle of convergence of 
the power series
\begin{equation}
\frac{1}{\EE_k(q)} = \sum_{n=0}^{\infty} \beta_k(n) \, q^n.
\end{equation}
Let $k \geq 12$ then Proposition \ref{prop3} and Corollary \ref{cor:RS} imply
that there are exactly two singularities provided by the two poles at
$q_{z_k}$ and $\overline{q}_{z_k}$. This implies that the radius of convergence
is equal to $\left| q_{z_k} \right| $.
Here we also used the well-known fact,
that the imaginary part of $\gamma(z)$, when $\gamma$ is in the
modular group and $z$ in the fundamental domain, does not increase.
Next we consider the Laurent expansion of $1 / \EE_k(q)$ around $q_{z_k}$.
We subtract the principal part from $1  /  \EE_k(q)$ and obtain a holomorphic
function at $q_{z_k}$. We iterate this procedure and consider the Laurent expansion 
around the other pole $\overline{q}_{z_k}$ and subtract again the principal part.
Note that we have poles of order one.
This implies that
\begin{equation} \label{subtract}
\frac{1}{\EE _{k}\left( q\right) }
- 
\frac{ \func{res}_{q_{z_k}} \EE_k^{-1}}{q - q_{z_k}} - 
\frac{ \func{res}_{\overline{q}_{z_k}} \, \, \EE_k^{-1}}{q - \overline{q}_{z_k}}
\end{equation}
has a holomorphic expansion $\sum_{n=0}^{\infty} b(n) \, q^n$, with a radius of
convergence larger than $\vert q_{z_k}\vert  = \left| \overline{q_{z_k}}\right| $.
This implies that $b(n) q_{z_k}^n$ and $b(n) \overline{q}_{z_k}^{n}$
constitute zero sequences.
The residue values can be expressed by $\Theta(\EE_k)$ evaluated at the poles.
This leads to an expression which allows in the final formula
the number $q_{z_k}^{-n}$ to appear instead of $q_{z_k}^{-(n+1)}$.
See also the proof of Theorem \ref{th1}.
By the identity principle $b(n)$ is equal to
\begin{equation}
\beta_{k}(n) +  \frac{1}{\Theta \left(\EE_k\right)  (q_{z_k})} \, q_{z_k}^{-n} + 
\frac{1}{\Theta \left(\EE_k\right) 
(\overline{q}_{z_k})} \, \overline{q}_{z_k}^{-n}. 
\end{equation}

This implies that 
\begin{equation*}
\sum_{n=0}^{\infty} \left( \beta_{k}(n) +  \frac{1}{\Theta \left(\EE_k\right)  (q_{z_k})} \, q_{z_k}^{-n} + 
\frac{1}{\Theta \left(\EE_k\right) 
(\overline{q}_{z_k})} \, \overline{q}_{z_k}^{-n} \right) q^{n} = \sum_{n=0}^{\infty}
b(n) q_{z_k}^n \, \left( \frac{q}{{q}_{z_k}}\right)^n 
\end{equation*}
for $ q \in \mathbb{C}$ and $ \left| q\right| < \vert q_{z_k} \vert$.
Let $w= q/ q_{z_k}$.
Then 
\begin{equation*}
\sum_{n=0}^{\infty} \left( \beta_{k}(n)  q_{z_k}^{n} +  \frac{1}{\Theta \left(\EE_k\right)  (q_{z_k})} + 
\frac{1}{\Theta \left(\EE_k\right) 
(\overline{q}_{z_k})} \, 
\left( \frac{q_{z_k}}{
\overline{q}_{z_k}}\right)^n
\right) w^n  = \sum_{n=0}^{\infty}
b(n) q_{z_k}^n \, w^n .
\end{equation*}
In the final step we compare the coefficients with respect to
$w^{n}$ and use the
identity principle for regular power series. Since $b(n) \, q_{z_k}^n$
constitutes a zero sequence, the 
claim of the theorem follows.
\end{proof}

\begin{proof}[Proof of Theorem \ref{th:subsequence}]
Let $k \equiv 0 \pmod{4}$ and $k \geq 12$. Let $z_k = x_k +iy_k$
be the zero of $E_k$ in $\cF $ with the largest imaginary part. Then
$z_k \neq i, \rho$. This implies by results
by Kanou \cite{K00} and Kohnen \cite{K03} that
$z_k$ is transcendental. Since we have chosen $z_k$ on the circle of unity, we
can conclude that $x_k$ and $y_k$ are also transcendental. 
By a well-known result by
Kronecker \cite{K84}, since $x_k$ is irrational, the orbit
\begin{equation}
\cO _k := \left\{  \left(  \frac{ q_{z_k}}{\overline{q}_{z_k}}\right)^n \, : n \in \mathbb{N} \right\}
\end{equation}
is dense in $\left\{ w= e^{2 \pi i \alpha} \, : \, \alpha \in [0,1) \right\} $.
Let $C_k:= 1/ \Theta(\EE_k)(q_{z_k})$. Since 
\begin{equation}
\overline{C}_k= 1/ \Theta(\EE_k)(\overline{q}_{z_k}),
\end{equation} for the closure of the set, we obtain
\begin{equation}
\cD _k:= \left\{
\frac{1}{ \Theta(\EE_k)(q_{z_k})} + \frac{1}{ \Theta(\EE_k)(\overline{q}_{z_k})} 
\left(\frac{q_{z_k}}{\overline{q}_{z_k}}\right)^{n}\, : \, n \in \mathbb{N} \right\}
\end{equation}
a circle with center $C_k$ and radius $\vert C_k\vert $:
\begin{equation}
\partial B_{\vert C_k \vert} (C_k)= \Big{\{} z \in \mathbb{C} \, : \, \vert z - C_k \vert = \vert C_k \vert \Big{\}}.
\end{equation}
We note that $0$ and $2 \, C_k$ are not elements of $\cD _k$.
Let $d_k \in \partial B_{\vert C_k \vert} (C_k)\setminus \left\{ 0\right\} $.
Then there exists a
subsequence $\left\{ n_{t}\right\} _{t=1}^{\infty }$
of $\{n\}_{n=1}^{\infty}$ such that,
\begin{equation}
\lim_{ t \to \infty} \frac{1}{ \Theta(\EE_k)(q_{z_k})} + \frac{1}{ \Theta(\EE_k)(\overline{q}_{z_k})} 
\left(\frac{q_{z_k}}{\overline{q}_{z_k}}\right)^{n_t} = d_k
\end{equation}
Combining this result with Theorem \ref{th4} proves the claim.
\end{proof}


\subsection{Proof of Theorem \ref{angle} and 
Corollary \ref{cor:0}}
We recall a result from complex analysis. 
P{\'o}lya and Szeg{\H{o}} recorded the following beautiful property
(\cite{PS78}, Part Three, Chapter~5).
Let $f(x)= \sum_{n=0}^{\infty} a(n) \, x^n$ be a power series with radius of convergence $ 0 < r < \infty$
and real coefficients.
We assume that we have only two singularities on the circle of convergence and that these two
singularities are poles:  $x_1 = r e^{i \alpha}$ and $x_2 = r e^{- i \alpha}$ with $0 < \alpha < \pi$.
Let $A(n)$ denote the number of changes of sign in the sequence $\{a(m)\}_{m=0}^n$. Then $\lim_{n \to \infty} \frac{A(n)}{n} = \frac{\alpha}{\pi}$. The number of changes of sign in a sequence of real numbers is given by the sign changes of the sequence, when all zeros are removed. Results in this direction had
also been given by K{\"o}nig \cite{K75} in 1875.
\begin{proof}[Proof of Theorem \ref{angle}, part a)]
Let $k \equiv 0 \pmod{4}$. Then $1/\EE_k(q)= \sum_{n=0}^{\infty} \beta_k(n) \, q^n$ has
a radius of convergence $\vert q_{z_k} \vert$, where $z_k = x_k + i y_k$ is the
zero of $E_k$ with the largest imaginary part with $0 < x_k < 1/2$.
We stated already that 
$q_{z_k}$ and $\overline{q}_{z_k}$ are the single two singularities on the circle of convergence.
Note that $q_{z_k} = r_k \cdot e^{2 \pi i x_k}$, where 
$r_k = e^{-2\pi y_k} = \vert q_{z_k} \vert$.
Further, $\overline{q}_{z_k} = r_k \cdot e^{-2 \pi i x_k}$. Thus all assumptions
are fulfilled to apply the above cited result for $A(n)= A_k(n)$ and $\alpha = 2 x_k$.
\end{proof}
\begin{example} 
We have
$z_{16}\approx 0.196527+ 0.980498 \, i$. See Table \ref{signs} for
values $A_{16}\left( n\right) /n$.
\begin{table}
\[
\begin{array}{|r||r|r|r|}
\hline
n&\frac{A_{16}\left( n\right) }{n}\approx &\frac{A_{16}\left( 10n\right) }{10n}\approx &\frac{A_{16}\left( 100n\right) }{100n}\approx \\ \hline \hline
2&0.50000000&0.40000000&0.39500000\\\hline
3&0.33333333&0.40000000&0.39333333\\\hline
4&0.50000000&0.40000000&0.39250000\\\hline
5&0.40000000&0.40000000&0.39400000\\\hline
6&0.33333333&0.40000000&0.39333333\\\hline
7&0.42857143&0.40000000&0.39285714\\\hline
8&0.37500000&0.40000000&0.39375000\\\hline
9&0.44444444&0.38888889&0.39333333\\\hline
10&0.40000000&0.39000000&0.39300000\\\hline
\end{array}
\]
\caption{\label{signs}Portion of sign changes for $k=16$.}
\end{table}

\end{example}
We also recall another interesting result stated in \cite{PS78} (Part Three,
Chapter 5).
Let $f(x)= \sum_{n=0}^{\infty} a(n) \, x^n$ be a power series with finite positive radius of 
convergence. We assume that there are
only poles on the circle of convergence.
Let $B(n)$ be the number of non-zero coefficients among the first $n$ coefficients $\{a(m)\}_{m=0}^{n-1}$.
Then the number of poles is not smaller than
\begin{equation}\label{non-zero}
\limsup _{ n \to \infty} \frac{n}{B(n)}.
\end{equation}
\begin{proof}[Proof of Theorem \ref{angle}, part b)]
The number of poles is $2$. Thus, by the result above, two is an upper bound for the term
(\ref{non-zero}), which completes the proof.
\end{proof}
\begin{example} We have $B_{12}(n) = B_{16}(n) = B_{20}(n)=n$ for $n \leq 1000$.
\end{example}
\begin{proof}[Proof of Corollary \ref{cor:0}]
From Theorem \ref{angle} we obtain
\begin{equation}
\lim_{n \to \infty} 
\frac{A_{4 \ell}}{4\ell} = 2 \, x_{4 \ell},
\end{equation}
where $x_{4 \ell}$ is the real part of the zero of $E_{4 \ell}$ with the largest imaginary part.
Finally, from Corollary \ref{cor:RS} the claim follows, since $x_{4 \ell}$ tends to zero.
\end{proof}

\begin{Acknowledgments}
To be entered later.
\end{Acknowledgments}


\end{document}